 \newtheorem{thm}{{Theorem}}[section]
 \newtheorem{prop}[thm]{Proposition}
 \newtheorem{cor}[thm]{{Corollary}}
 \newtheorem{defn}[thm]{{Definition}}
 \newtheorem{ejemplo}[thm]{{Example}}
\newcommand{\N}{{\mathbb N}}
\newcommand{\R}{{\mathbb R}}
\newcommand{\dom}{\textit{\sf dom}}
\newcommand{\im}{\textit{\sf im}}
\newcommand{\ev}{\text{\sf ev}}
\newcommand{\cantor}{2^{\N}}
\newcommand{\ideal}{\mathcal{I}}
\def\sd{{\scriptstyle \triangle}}
\title{Pettis property for Polish inverse semigroups}
\author{Karen Arana}
\address{Escuela de Matem\'aticas, Universidad Industrial de Santander, C.P. 680001, Bucaramanga - Colombia.}
\email{daniarana22@gmail.com}
\author{Jerson P\'erez}
\address{Escuela de Matem\'aticas, Universidad Industrial de Santander, C.P. 680001, Bucaramanga - Colombia.}
\email{jersonenrique\_64@hotmail.com}
\author{Carlos Uzc\'{a}tegui}
\address{Escuela de Matem\'aticas, Universidad Industrial de Santander, C.P. 680001, Bucaramanga - Colombia.}
\email{cuzcatea@saber.uis.edu.co}
\subjclass{Primary 22A15, 03E15; Secondary 54H15}
\keywords{Inverse topological semigroup,  Polish semigroup, Pettis theorem, automatic continuity}
\date{}
\begin{document}

\maketitle
\begin{abstract}
We study a property about Polish inverse semigroups similar to the classical theorem of Pettis about Polish groups. In contrast to what happens with Polish groups, not every Polish inverse semigroup have the Pettis property.  We present several examples of Polish inverse subsemigroup of the symmetric inverse semigroup $I(\N)$ of all partial bijections between subsets of $\N$. We also study whether our examples  satisfy automatic continuity.
\end{abstract}

\section{Introduction}
Let $\mathcal{C}$ be a class of topological groups, typically, $\mathcal{C}$ consists of all  Polish groups or more generally all second countable topological groups. A topological group $G$ has automatic continuity  with respect to the class $\mathcal{C}$ if every  homomorphism from $G$ into a group in $\mathcal{C}$ is continuous. There has been some increasing interest in the phenomena of automatic continuity in Polish groups \cite{KechrisRosenthal2007,Rosendal2009,RosendalSolecki2007,Sabok2019}.
A typical argument to get automatic continuity for Polish groups uses the  classical Pettis theorem: For every non meager Baire measurable subset $A$ of a Polish group $G$ there is an open set $V$ such that $1_G\in V\subseteq A^{-1}A$ (see, for instance, \cite[Theorem 9.9]{kechris1995}).  A well known consequence  of this is that every Baire measurable homomorphism between Polish groups is automatically continuous (see for instances \cite[Theorem 9.10]{kechris1995}). 
The restriction to Baire measurable homomorphism is crucial, for instance, it is well known that the additive groups $\R$ and $\R^2$ are isomorphic as vector spaces over the rationals but the isomorphism cannot  be continuous. To get the full automatic continuity the group has to satisfy some special propery (like having ample generics \cite{KechrisRosenthal2007} or the Steinhauss property \cite{RosendalSolecki2007}). Automatic continuity of a Polish group is closely related to the fact that the group carries a unique Polish group topology; the interested reader can consult \cite{Rosendal2009} for more information about it. 

Recently, there have been some  works about automatic continuity for Polish semigroup topologies (see \cite{elliott2020,elliott2022,MES, PerezUzca2022}).  In this paper we explore a possible generalization of Pettis theorem now for Polish inverse semigroups. 
We say that a topological inverse semigroup $S$ has the {\em Pettis property} if for every non meager  $A\subseteq S$ with the Baire property, there is an idempotent $e\in S$ and an open set $V$ such that $e\in V$ and $V\subseteq A^{-1}A$. 
As we said above, every Polish group has the Pettis property as an inverse semigroup. In contrast, we show that no every Polish inverse semigroup has the Pettis property. Another difference between these two algebraic structures is that the Pettis property for Polish inverse semigroups does not imply that  Baire measurable homomorphism are automatically continuous. One the purposes of this article is to provide examples of Polish inverse semigroups realizing each  Boolean combinations of these properties. 

The symmetric inverse  semigroup  $I(X)$ on an arbitrary non empty set $X$ consists of all partial bijections between subsets of $X$. It is fundamental in semigroup theory since, by  a well known result of Wagner and Preston, every inverse semigroup is isomorphic to a subsemigroup of $I(X)$ for some $X$ (see, for instance, \cite{Howie}).  $I(X)$ carries a very natural inverse semigroup topology introduced recently in \cite{elliott2020, PerezUzca2022} which generalizes the product topology on $X^X$ (with $X$ discrete). In the particular case of $X$ being $\N$,  $I(\N)$ turns out to be a Polish (i.e. completely metrizable and separable) inverse semigroup containing  the symetric group $S_\infty(\N)$ (with its usual product topology) as a Polish subgroup.  
In \cite{elliott2020} is shown that $I(\N)$ has automatic continuity with respect to all second countable inverse semigroups.  We show that $I(\N)$ does not have the Pettis property. 
We study several Polish inverse subsemigroups of $I(\N)$. In particular, we analyze  inverse subsemigroups generated by families of groups of the form $S_\infty(B)$ for $B\subseteq \N$.

\section{Preliminaries}

A topological space is Polish if it is completely metrizable and separable. We refer to \cite{kechris1995} as a general reference for the descriptive set theory of Polish spaces.   
$\N$ denotes the non negative integer. As usual, we identify a subset $A\subseteq \N$ with its characteristic function and thus a collection $\mathcal{C}$ of subsets of $\N$ is seen as a subset of $\cantor$ and we can talk about closed, open, $F_\sigma$, $G_\delta$ etc. collections of subsets of $\N$. 
A family of subsets of a set $X$ is {\em almost disjoint} if $A\cap B$ is finite for every pair of sets in the family.  A subset $A$ of a topological space  has the {\em Baire property} or is {\em Baire measurable}, if there is an open set $V$ such that $A\sd V$ is meager.

A {\em semigroup} is a non-empty set $S$ together with an associative binary operation $\circ$. To simplify the notation we sometimes write $st$ in placed of $s\circ t$.
A semigroup  $S$ is  {\em regular},  if for all $s\in S$, there is $t\in S$ such that $st s=s$ and $tst=t$. In this case, $t$ is called  an inverse of $s$. If each element have a unique inverse,  $S$ is an {\em inverse} semigroup and, in this case, $s^\ast$ denotes  the inverse of $s$. An element $s$ of a semigroup is {\em idempotent} if $ss=s$. We denote by $E(S)$ the collection of idempotents of $S$. We use Howie's book \cite{Howie} as a general reference for semigroup theory

Let $\tau$ be a topology  on a semigroup $S$.  If the multiplication $S\times S\to S$ is continuous, we call $S$ a {\em topological semigroup}.  An inverse semigroup $S$ is called topological, if it is a topological semigroup and the function $i:S\to S$, $s\to s^\ast$ is continuous.
We refer the reader to \cite{Carruthetal} as a general reference for topological semigroups.

The symmetric inverse semigroup on a set $X$ is defined as follows:
$$
I(X) = \{f : A \rightarrow B |\;A, B \subseteq X\; \mbox{and $f$ is bijective}\}.
$$
For $f : A \rightarrow B$ in $I(X)$ we denote $A = \dom(f)$  and $B = \im(f)$. The operation on $I(X)$ is the usual composition, namely, given $f, g \in I(X)$, then $f \circ g$ is defined by letting $\dom(f \circ g)=g^{-1}(\dom(f) \cap \im(g))$ and $(f \circ g)(x) = f(g(x))$, if $x \in \dom(f \circ g)$. The idempotents  of $I(X)$ are the partial identities $1_{A} : A \rightarrow A, 1_{A}(x) = x$ for all $x \in A$ and $A \subseteq X$.
Notice that $1_{\emptyset}$ is the empty function which also belongs to $I(X)$.
Let $S_{\infty}(X)$ be the symmetric group, that is, the collection of all bijection from $X$ to $X$. 

The following functions play an analogous role in $I(X)$ as the projection functions do in  the product space $X^X$. Let $D_{x} = \{f\in I(X):\; x\in 
\dom(f)\}$ and $2^{X}$ denotes the power set of $X$.
\[
\begin{array}{lll}
\dom : I(X) \rightarrow 2^{X}, &f \mapsto  \dom(f),\\
\im : I(X) \rightarrow 2^{X}, & f \mapsto  \im(f),\\
\ev_{x} : D_x \rightarrow X, & f \mapsto  f(x),\;\mbox{for $x \in X$}.
\end{array}
\]
For $x, y \in X$, let
\[
\begin{array}{lcl}
v(x, y) & = &  \{f \in I(X)\; |\; x \in \dom(f) \;\mbox{and}\; f(x) = y\},\\
w_{1}(x) & = & \{f \in I(X)\; |\; x \notin \dom(f)\},\\
w_{2}(y) & = & \{f \in I(X)\; |\; y \notin \im(f)\}.
\end{array}
\]

The sets $v(x, y)$ are motivated by the usual subbase for the product topology on $X^{X}$. The topology generated by  the sets $v(x,y)$, $w_1(x)$ and $w_2(x)$ is denoted by $\tau_{pp}$. This topology was defined in \cite{elliott2020,PerezUzca2022}. The topology $\tau_{pp}$ is the smallest Hausdorff topology that makes continuous the functions $\dom$, $\im$ and $\ev_x$ (for $x\in X$), where $2^X$ is given the product topology.  For this paper, we work only with $X=\N$ and, unless said otherwise,  we always use this topology on $I(\N)$. $(I(\N), \tau_{pp})$ is a Polish inverse semigroup. Convergence in $I(\N)$ is as follows (see \cite{PerezUzca2022}).

\begin{prop}
\label{conv}
Let $(f_n)_n$  be a sequence in $I(\N)$ and $f\in I(\N)$. Then, $f_n\to f$ if and only if  the following conditions hold.

\begin{itemize}
\item[(i)] For all $x\in \dom(f)$ there is $n_{0}\in \N$ such that  $x\in \dom(f_n)$  and $f_n(x)=f(x)$ for all  $n\geq n_0$.

\item[(ii)]  For all $x\not\in \dom(f)$ there is $n_0\in\N$ such that $x\not\in \dom(f_n)$  for all $n\geq n_0$.
\end{itemize}
\end{prop}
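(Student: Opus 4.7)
The plan is to use that $\tau_{pp}$ is generated by the subbase $\{v(x,y), w_1(x), w_2(y) : x, y \in \N\}$ and, equivalently, is the smallest Hausdorff topology making $\dom$, $\im$, and $\ev_x$ continuous.

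\emph{Necessity $(\Rightarrow)$.} Suppose $f_n \to f$ in $\tau_{pp}$. By continuity of $\dom : I(\N) \to 2^{\N}$, we have $\dom(f_n) \to \dom(f)$ in the product topology on $2^{\N}$; reading this coordinatewise yields (ii) directly, and also shows that for every $x \in \dom(f)$ we eventually have $x \in \dom(f_n)$. To complete (i), fix such an $x$; then $f_n \in D_x$ for all large $n$, and continuity of $\ev_x : D_x \to \N$ together with the discreteness of $\N$ forces $f_n(x) = f(x)$ for all sufficiently large $n$.

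\emph{Sufficiency $(\Leftarrow)$.} Assume (i) and (ii). Since the subbase generates the topology, it suffices to show $f_n$ eventually belongs to each subbasic open set containing $f$. If $f \in v(x,y)$ then $x \in \dom(f)$ with $f(x)=y$, and (i) immediately gives $f_n \in v(x,y)$ for large $n$. If $f \in w_1(x)$ then $x \notin \dom(f)$, and (ii) gives $f_n \in w_1(x)$ for large $n$. For $f \in w_2(y)$, i.e.\ $y \notin \im(f)$, I would argue by contradiction: from $y \in \im(f_{n_k})$ along a subsequence, pick preimages $x_k = f_{n_k}^{-1}(y)$; if infinitely many $x_k$ coincide with some fixed $x \in \dom(f)$ then (i) forces $f(x) = y$, contradicting $y \notin \im(f)$, while any $x_k \notin \dom(f)$ is eventually outside $\dom(f_n)$ by (ii), and so cannot serve as a preimage past that point.

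The step I expect to be the main obstacle is the $w_2(y)$ case: conditions (i) and (ii) encode only domain-side information, so converting it into image-side control requires care, especially when the putative preimages $x_k$ vary with $k$ and are unbounded. To make this airtight one may need to invoke the continuity of inversion $s \mapsto s^{*}$ on $I(\N)$, so that applying (i) and (ii) to $f_n^{*}$ and $f^{*}$ supplies the missing image-side information; alternatively, one can exploit the bijectivity of each $f_n$ together with first-countability of $\tau_{pp}$ to pass to a diagonal subsequence on which the preimages stabilize.
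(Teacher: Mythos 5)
Your necessity direction is correct, and in the sufficiency direction the $v(x,y)$ and $w_1(x)$ cases are fine. But the $w_2(y)$ case, which you rightly flag as the main obstacle, is not merely delicate: it cannot be carried out, because conditions (i) and (ii) do not imply any image-side convergence. Take $f=1_{\emptyset}$ and $f_n$ with $\dom(f_n)=\{n\}$ and $f_n(n)=0$. Then (i) is vacuous and (ii) holds (each fixed $x$ lies outside $\dom(f_n)$ for all $n>x$), yet $0\in\im(f_n)$ for every $n$ while $0\notin\im(f)$, so $f_n\notin w_2(0)$ for all $n$ and hence $f_n\not\to f$ in $\tau_{pp}$. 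This example also shows why both of your proposed repairs fail: applying (i) and (ii) to $f_n^{*}$ and $f^{*}$ presupposes exactly the image-side information that is missing (it is an extra hypothesis, not a consequence of (i) and (ii)), and no diagonal subsequence can make the preimages $x_k=n_k$ stabilize here, since they genuinely escape to infinity while the conclusion is false.

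What this means is that the statement as printed, with convergence understood in $\tau_{pp}$ (whose subbase includes the sets $w_2(y)$ and which makes $\im$ continuous), is incomplete: the correct characterization of $\tau_{pp}$-convergence requires, in addition to (i) and (ii), the dual conditions obtained by applying (i) and (ii) to $f_n^{*}$ and $f^{*}$ (equivalently: every $y\in\im(f)$ is eventually in $\im(f_n)$ with $f_n^{-1}(y)=f^{-1}(y)$, and every $y\notin\im(f)$ is eventually outside $\im(f_n)$). Conditions (i) and (ii) alone characterize convergence only for the coarser topology generated by the sets $v(x,y)$ and $w_1(x)$. The paper gives no proof of this proposition (it is quoted from \cite{PerezUzca2022}), so there is no internal argument to compare against; your write-up, by isolating the $w_2(y)$ step, has in effect located a missing hypothesis in the statement rather than a missing idea in your proof, and once the dual conditions are added your subbase-by-subbase argument goes through verbatim.
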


\section{The Pettis property for inverse semigroups}

A topological inverse semigroup $S$ has the {\em Pettis property} if for every non meager $A\subseteq S$ with the Baire property, there is an idempotent $e\in S$ and an open set $V$ such that $e\in V$ and $V\subseteq A^{-1}A$. 

Notice that if $x$ is an isolated point of $S$ and $xx^{-1}$ is not isolated, then $S$ does not have the Pettis property by a trivial reason: take $A=\{x\}$, then $A$ is not meager but does not satisfy the conclusion of the Pettis property.  In contrast to what happens for topological groups,  a non discrete topological semigroup can have an isolated idempotent. The following proposition is a simple criterion for having the Pettis property. It will be used in the sequel.

\begin{prop}
\label{criterio-1}
Let $S$ be a Polish inverse semigroup. Let $I$ be the collection of isolated points of $S$. Suppose that $x^{-1}x\in I$ for all $x\in I$. If there is a countable collection $\{O_n: \; n\in \N\}$ of open subgroups of $S$ such that $I\cup \bigcup_n O_n$ is dense in $S$, then $S$ has the Pettis property.

\end{prop}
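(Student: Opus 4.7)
The plan is to take a non-meager set $A\subseteq S$ with the Baire property, write $A=U\sd M$ with $U$ open in $S$ and $M$ meager (possible since $A$ has the Baire property), and observe that $U$ is non-empty because $A$ is non-meager. The density hypothesis then forces $U\cap\bigl(I\cup\bigcup_n O_n\bigr)\neq\emptyset$, which splits the argument into two cases.

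Suppose first there is $x\in U\cap I$. Then $\{x\}$ is a non-empty open, hence non-meager, set, and since $\{x\}\setminus A\subseteq U\sd A$ is meager we must have $x\in A$. Let $e:=x^\ast x$; this is an idempotent of $S$ and, by hypothesis, $e\in I$, so $V:=\{e\}$ is open. Because $x\in A$ we have $e=x^\ast x\in A^{-1}A$, so $e\in V\subseteq A^{-1}A$, as required.

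Suppose instead $U\cap O_n\neq\emptyset$ for some $n$, and write $O:=O_n$. As an open subset of the Polish space $S$, $O$ is itself Polish. The key structural remark is that $O$ is a Polish topological group in which the group inverse coincides with the inverse-semigroup involution $s\mapsto s^\ast$: uniqueness of inverses in an inverse semigroup identifies the two, and continuity of multiplication and of $\ast$ then transfer from $S$ to $O$. In particular, the identity $e$ of $O$ is an idempotent of $S$. Now $A\cap O$ has the Baire property in $O$ and is non-meager in $O$, since $(A\cap O)\sd(U\cap O)$ is meager in $O$ while $U\cap O$ is a non-empty open subset of the Baire space $O$. The classical Pettis theorem applied inside the Polish group $O$ then yields an open neighborhood $V\subseteq O$ of $e$ with $V\subseteq (A\cap O)^{-1}(A\cap O)\subseteq A^{-1}A$; and $V$ is open in $S$ because $O$ is.

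The argument is short and no step is genuinely difficult. The only point requiring care is the identification, in the second case, of the group inverse within the subgroup $O$ with the semigroup involution of $S$; without this identification, the conclusion of the classical Pettis theorem inside $O$ would not automatically live inside $A^{-1}A$ in the sense used to define the Pettis property for inverse semigroups.
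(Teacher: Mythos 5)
Your proof is correct and follows essentially the same route as the paper: split into the case where an isolated point is involved (using the hypothesis that $x^{-1}x\in I$ to get an open singleton inside $A^{-1}A$) and the case where $A$ meets some open subgroup $O_n$ non-meagerly, then invoke the classical Pettis theorem inside that Polish group. The only differences are cosmetic --- you phrase the case split via the open set $U$ with $A\sd U$ meager and you spell out details the paper leaves implicit (non-meagerness of $A\cap O_n$ in $O_n$ and the identification of the group inverse in $O_n$ with the involution of $S$).
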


\begin{proof}
Let $A\subseteq S$ be a non meager set with the Baire property. As $I\cup \bigcup_n O_n$ is open dense, there are two cases:

(a) Suppose $A\cap I\neq \emptyset$ and  let $x\in A\cap I$, then $e=x^{-1}x$ is isolated and $e\in int(A^{-1}A)$.

(b) Suppose $A\cap I=\emptyset$. Then, there is $n$ such that $A\cap O_n$ is not meager. Clearly $A\cap O_n$ has the Baire property. Since $O_n$ is a Polish group, by Pettis theorem, there is a set $V\subseteq O_n$ open in $O_n$ (hence, open in $S$ too) containing the identity $e$ of $O_n$ such that $V\subseteq (A^{-1}A)\cap O_n$. 
\end{proof}

It is a classical result that any non meager Baire measurable subgroup of a Polish group  is clopen (see, for instance, \cite[Exercise 9.11]{kechris1995}). Our next result shows a similar fact for subgroups of  Polish inverse semigroups.

\begin{prop}
Let $S$ be a Polish inverse semigroup with the Pettis property. Let $G$ be a subgroup of $S$ and suppose that  $l_x:S\to S$ given by $l_x(y)=yx$ is an open map for each $x\in G$. If $G$ is non meager and Baire measurable, then $G$ is clopen.
\end{prop}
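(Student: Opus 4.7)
The plan is to apply the Pettis property to $G$ to find an open neighborhood of a distinguished idempotent contained in $G$, then use the hypothesis that $l_x$ is open for $x\in G$ to spread this neighborhood over all of $G$ (showing $G$ is open), and finally reduce the closedness of $G$ to the classical fact that open subgroups of topological groups are closed, applied inside $H := \overline{G}$.

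First I would invoke the Pettis property on $G$, which is non meager and Baire measurable, obtaining an idempotent $e\in S$ and an open set $V\subseteq S$ with $e\in V\subseteq G^{-1}G$. Since $G$ is a subgroup of the inverse semigroup $S$, uniqueness of inverses in $S$ forces the group-theoretic inverse of any $g\in G$ to coincide with the semigroup inverse $g^\ast$, so $G^{-1}G = G\cdot G = G$. Thus $e\in V\subseteq G$, and because $e$ is an idempotent lying in the group $G$, necessarily $e=e_G$, the identity of $G$. For each $g\in G$, the set $Vg=l_g(V)$ is open in $S$ by hypothesis, contains $g=e_Gg$, and is contained in $Gg\subseteq G$. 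Hence $G$ is open.

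To close the argument I would work inside $H := \overline{G}$. Continuity of multiplication and inversion in $S$, together with passage to limits of sequences from $G$, shows that $H$ is closed under both operations and that $eh=he=h$ and $hh^\ast=h^\ast h=e$ for every $h\in H$. Thus $H$ is a topological group (in the subspace topology) with identity $e$, and $G$ is an open subgroup of $H$. The classical argument for topological groups then applies: $H\setminus G$ is a union of left cosets $hG$ with $h\in H$, each of which is open in $H$ because left translation by an element of $H$ is a homeomorphism of the topological group $H$. Therefore $G$ is closed in $H$; combined with the fact that $G$ is dense in $H$, this gives $G=H$, i.e., $G$ is closed in $S$.

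The main obstacle I anticipate is precisely the passage from $G$ open to $G$ closed. The openness hypothesis on $l_x$ is only available for $x\in G$, so one cannot directly translate $V$ by an element of $S\setminus G$ to try to cover its complement by open sets. Recasting the problem inside the topological group $H=\overline{G}$, where the full group structure provides the needed homogeneity, is what circumvents this difficulty.
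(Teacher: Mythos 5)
Your proof is correct and follows essentially the same route as the paper: Pettis gives an open $V$ with the identity of $G$ inside $V\subseteq G^{-1}G=G$, the openness of the translations $l_x$ for $x\in G$ covers $G$ by open sets, and closedness is obtained by viewing $G$ as an open (hence closed) dense subgroup of the topological group $H=\overline{G}$. You merely spell out details the paper leaves implicit (that $G^{-1}G=G$, that the Pettis idempotent must be $1_G$, and that $\overline{G}$ is indeed a topological group).
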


\begin{proof}
By the Pettis property of $S$, there is $V$ open such that $1_G\in V\subseteq G$. Since $l_x$ is an open map for each $x\in G$ and $G=\bigcup_{x\in G}Vx$,  $G$ is open. Let $H=\overline{G}$. Then $H$ is a Polish subgroup of $S$ and $G$ is an open subgroup of $H$. Then $H=G$, as it happens on topological groups (if $x\in H\setminus G$, then $G\cap Gx=\emptyset$, which contradicts that both are open in $H$ and $G$ is dense in $H$). 
\end{proof}

\section{Inverse subsemigroups of $I(\N)$ generated by groups}

In this section we study subsemigroups of $I(\N)$ generated by families of subgroups of $I(\N)$, that is, by groups of the form $S_\infty(A)$.  More precisely, we analyze the inverse submsemigroup of $I(\N)$ generated by a collection of groups $S_\infty(B_i)$ where $\{B_i: i\in I\}$ is an almost disjoint family. 
We find conditions that guarantee that those semigroups are closed in $I(\N)$ and thus Polish. We also present examples of such semigroups with and without the Pettis property.

For each $k\in \mathbb{N}$ and $A,B\subseteq \mathbb{N}$, we define 
$$
I_k(A,B)=\{f\in I(\mathbb{N})\; |\;\dom(f)\subseteq A\textit{, }\im(f)\subseteq B \textit{ and } |\dom(f)|=k\}.
$$
Notice that $I_0(A,B)=\{1_\emptyset\}$. Also, we let
\[
S(A,B)=\{f\in I(\N):\;\dom(f)=A\;\;\&\;\; \im(f)=B\}.
\]

\begin{prop} 
\label{2.2}
Let  $\{B_i\;|\;i\in \N\}$ be an almost disjoint family of infinite subsets of $\N$. Given   $n\in \N$ we set
$$
S=\displaystyle\bigcup_{i\in \N}S_{\infty}(B_i)\cup \bigcup_{i,j\in \N}\bigcup_{k=0}^{n}I_k(B_i,B_j).
$$ 
Then, $S$ is a subsemigroup of $I(\N)$ iff $|B_i\cap B_j|\leq n$, for each $i\not=j$.
\end{prop}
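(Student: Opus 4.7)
The plan is to prove both directions separately, with the bulk of the work being a short case analysis for the ``if'' direction.

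For the \emph{only if} direction, suppose $S$ is a subsemigroup and, for contradiction, that $|B_i\cap B_j|>n$ for some $i\ne j$. Since $1_{B_i}\in S_\infty(B_i)\subseteq S$ and $1_{B_j}\in S_\infty(B_j)\subseteq S$, their product $1_{B_i}\circ 1_{B_j}=1_{B_i\cap B_j}$ must lie in $S$. But $B_i\cap B_j$ is finite (by almost disjointness) and strictly larger than $n$, so $1_{B_i\cap B_j}$ lies in no $S_\infty(B_k)$ (whose elements have infinite domain) and in no $I_k(B_l,B_m)$ with $k\le n$ (since its domain has more than $n$ elements). This contradiction gives the required bound.

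For the \emph{if} direction, assume $|B_i\cap B_j|\le n$ whenever $i\ne j$, and fix $f,g\in S$. I would split into four cases according to which piece of the union each of $f,g$ belongs to, and in every case verify that $\dom(f\circ g)$ is contained in some $B_l$, that $\im(f\circ g)$ is contained in some $B_m$, and that $|\dom(f\circ g)|$ is either equal to $|B_l|$ (landing in $S_\infty(B_l)$ after checking also that $B_l=B_m$) or bounded by $n$ (landing in some $I_{k}(B_l,B_m)$ with $k\le n$). The only case where the hypothesis is really used is $f\in S_\infty(B_i)$, $g\in S_\infty(B_j)$ with $i\ne j$: here $\dom(f\circ g)=g^{-1}(B_i\cap B_j)$ has size exactly $|B_i\cap B_j|\le n$, and both the domain and image sit inside $B_j$ and $B_i$ respectively, so $f\circ g\in I_k(B_j,B_i)$ with $k\le n$. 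In the remaining three cases (one of $f,g$ in some $I_k(\cdot,\cdot)$ with $k\le n$, or both in such pieces), the size of $\dom(f\circ g)$ is automatically bounded by $\min\{k,k'\}\le n$, and the inclusions of domain and image into the appropriate $B_l$ and $B_m$ follow immediately from $\dom(f\circ g)\subseteq \dom(g)$ and $\im(f\circ g)\subseteq \im(f)$.

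There is no real obstacle; the only mild subtlety is bookkeeping in the mixed case $f\in S_\infty(B_i)$, $g\in I_k(B_l,B_m)$, where one must separately treat $i=m$ (giving $f\circ g\in I_k(B_l,B_m)$ with the original $k$) and $i\ne m$ (giving $f\circ g\in I_{k'}(B_l,B_i)$ with $k'\le \min\{k,|B_i\cap B_m|\}\le n$), and symmetrically for $f\in I_k(B_l,B_m)$, $g\in S_\infty(B_j)$. Once these cases are laid out, each becomes a one-line verification.
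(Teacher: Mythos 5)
Your proposal is correct and follows essentially the same route as the paper: the converse direction multiplies an element of $S_\infty(B_i)$ by one of $S_\infty(B_j)$ (you use the idempotents $1_{B_i}\circ 1_{B_j}=1_{B_i\cap B_j}$, the paper uses arbitrary group elements, which is the same idea) to force $|B_i\cap B_j|\le n$, and the forward direction is the same case analysis on which piece of the union $f$ and $g$ lie in. If anything, your case bookkeeping is slightly more complete than the paper's, which leaves the case $f\in I_k(B_l,B_m)$, $g\in S_\infty(B_j)$ implicit.
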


\begin{proof}
Suppose that $S$ is a semigroup. Let $i\not=j$ and  $f\in S_\infty(B_j)$ and $g\in S_\infty(B_i)$. Then $|\dom(f\circ g)|=|g^{-1}(\im(g)\cap \dom(f))|=|B_i\cap B_j|$. Since $f\circ g\in S$ and $B_j\cap B_i$ is finite,  there  is $0\leq k\leq n$ such that $f\circ g \in I_k(B_i,B_j)$. Thus, $|\dom(f\circ g)|=|B_i\cap B_j|=k\leq n$.

Conversely, suppose that $|B_i\cap B_j|\leq n$ for all $i\not=j$ and let $f,g\in S$. We have to show that $f\circ g\in S$. If $f,g\in S_\infty(B_i)$, for some $i\in \N$, then $f\circ g\in S_\infty(B_i)$. If $f\in S_\infty(B_j)$ and $g\in S_\infty(B_i)$, with $i\not=j$, then $f\circ g\in I_{k}(B_i, B_j)$, where $k=|B_{i}\cap B_{j}| $. Finally, if $g\in I_k(B_i,B_j)$, then $f\circ g\in I_l(B_i,B_p)$ for some $l\leq k$ and $p\in \N$.

\end{proof}

\begin{prop} 
\label{semigrupogenerado}
Let $\{B_i\;|\;i\in \N\}$ be a family of infinite subsets of $\N$ such that, for some fixed $n$, $|B_i\cap B_j|=n$ for all $i\neq j$.  
Let 
$$
S=gen\displaystyle\left(\bigcup_{i\in \N}S_{\infty}(B_i)\right)
$$
be the inverse subsemigroup generated by $\bigcup_{i\in \N}S_{\infty}(B_i)$. Then 
\begin{equation}
\label{generado}
S=\bigcup_{i\in \N}S_{\infty}(B_i)\cup \bigcup_{i,j\in \N}\bigcup_{k=0}^{n}I_k(B_i,B_j).
\end{equation}
\end{prop}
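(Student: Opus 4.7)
The plan is to prove $S=T$, where $T$ denotes the right-hand side of (\ref{generado}).

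For the inclusion $S\subseteq T$, I note that Proposition~\ref{2.2} applies (since $|B_i\cap B_j|=n$ trivially satisfies the hypothesis $\le n$), giving that $T$ is a subsemigroup of $I(\N)$. Moreover $T$ is closed under the operation $f\mapsto f^\ast$: each $S_\infty(B_i)$ is a group, and $I_k(B_i,B_j)^\ast=I_k(B_j,B_i)$. Thus $T$ is an inverse subsemigroup containing $\bigcup_i S_\infty(B_i)$, which forces $S\subseteq T$.

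For $T\subseteq S$, I would show that every $f\in I_k(B_i,B_j)$ with $0\le k\le n$ lies in $S$. Assume first $1\le k\le n$; write $\dom(f)=\{x_1,\ldots,x_k\}$ and $y_l=f(x_l)\in B_j$. Pick any index $p\notin\{i,j\}$ (available because the family is indexed by $\N$) and set $C=B_i\cap B_p$, $D=B_j\cap B_p$, both of size $n$. The idea is to realize $f$ as a three-step composition $g_3\circ g_2\circ g_1$ with $g_1\in S_\infty(B_i)$, $g_2\in S_\infty(B_p)$, $g_3\in S_\infty(B_j)$: I choose distinct $z_1,\ldots,z_k\in C$ and distinct $w_1,\ldots,w_k\in D$, and extend the partial assignments $x_l\mapsto z_l$, $z_l\mapsto w_l$, $w_l\mapsto y_l$ to bijections of $B_i$, $B_p$, $B_j$ respectively. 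The crucial extra requirement on $g_2$ is that it send the remaining $n-k$ elements of $C$ outside $D$ (possible because $B_p\setminus D$ is infinite); this forces $g_2^{-1}(D)\cap C=\{z_1,\ldots,z_k\}$, and a direct domain computation using the composition rule in $I(\N)$ then yields $\dom(g_3\circ g_2\circ g_1)=\{x_1,\ldots,x_k\}$ with the right values. For $k=0$, the only element is $1_\emptyset$: if $n=0$ then $1_{B_1}\circ 1_{B_2}=1_\emptyset$ directly; if $n\ge 1$, taking three distinct indices and any $g_2\in S_\infty(B_2)$ with $g_2(B_2\cap B_3)\subseteq B_2\setminus B_1$ gives $1_{B_1}\circ g_2\circ 1_{B_3}=1_\emptyset$.

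The main obstacle is precisely the domain bookkeeping in the three-fold composition. A two-step product $g\circ h$ with $g\in S_\infty(B_i)$ and $h\in S_\infty(B_j)$ for $i\ne j$ always has domain of size exactly $n$, so two factors are never enough to reach $I_k$ for $k<n$; introducing the intermediate group $S_\infty(B_p)$ is what provides the flexibility to trim the domain down to the desired size $k$, by strategically diverting $n-k$ auxiliary elements of $C$ away from $D$.
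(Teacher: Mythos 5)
Your proof is correct, but it takes a genuinely different route from the paper's. The paper first realizes the top rank $I_n(B_i,B_j)$ directly as a product of two permutations, using the actual intersection $B_i\cap B_j=\{s_1,\dots,s_n\}$ as the relay set, and then runs a downward induction from $I_k$ to $I_{k-1}$: a rank-$(k-1)$ map is written as $g\circ 1_{B_i}\circ h$ (respectively $g\circ 1_{B_j}\circ h$ when $i=j$) where $h$ is a rank-$k$ element already known to lie in $S$ whose extra point is deliberately sent outside $B_i\cap B_j$, so that composing with the partial identity trims the domain by one point. Your argument instead gives a one-shot factorization $f=g_3\circ g_2\circ g_1$ through an auxiliary block $B_p$ with $p\notin\{i,j\}$, using the freedom inside $S_\infty(B_p)$ to divert $n-k$ elements of $B_i\cap B_p$ away from $B_j\cap B_p$; this achieves every rank $k\le n$ uniformly, treats $i=j$ and $i\neq j$ simultaneously, avoids induction and avoids using idempotents other than those implicit in the groups, and in fact anticipates the chain-type argument the paper uses later for general almost disjoint families in \eqref{generado2}. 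What the paper's inductive scheme buys in exchange is economy of blocks: it only ever uses the two blocks $B_i,B_j$ (plus one auxiliary block when $i=j$), so it transfers verbatim to families with as few as two members, which is relevant to the finite-family theorem the paper later cites as ``entirely analogous''; your construction needs a third index, which is freely available here since the family is indexed by $\N$, so for the proposition as stated there is no gap. Your handling of the inclusion $S\subseteq T$ (Proposition \ref{2.2} plus closure of the right-hand side under $f\mapsto f^\ast$) and of the case $k=0$ matches the needed details.
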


\begin{proof}
First, notice that  $\subseteq$ in (\ref{generado}) follows from Proposition \ref{2.2}.


To show $\supseteq$ in (\ref{generado}), we first verify that  $I_n(B_i,B_j)\subseteq S$ for all $i,j\in \N$. Suppose $i\not=j$. Let $f\in I_n(B_i,B_j)$, $\dom(f)=\{r_1,\cdots ,r_n\}$ and $B_i\cap B_j=\{s_1,\cdots ,s_n\}$. Pick  $g\in S_\infty(B_j)$ and $h\in S_\infty(B_i)$ such that $g(s_i)=f(r_i)$ and $h(r_i)=s_i$ for each $i\in \{1,\cdots, n\}$. It is easy to see that $f=g\circ h$, thus $f\in S$.

It remains to show that $I_n(B_i, B_i)\subseteq S$ for each $i\in \N$. Let $f\in I_n(B_i, B_i)$ and  fix $j\in \N$ with $i\not=j$. Then, $\dom(f)=\{r_1,\cdots,r_n\}$ and $B_i\cap B_j=\{s_1,\cdots ,s_n\}$. Pick $h\in S_\infty(B_i)$ and $g\in S_\infty(B_j)$ such that $h(r_i)=s_i$ and  $g(s_i)=f(r_i)$ for each $i\in \{1,\cdots, n\}$. Then, $f=g\circ 1_{B_j}\circ h$, i.e. $f\in S$.

Next,  we show that  if $I_k(B_i,B_j)\subseteq S$, then $I_{k-1}(B_i,B_j)\subseteq S$ for any  $k\leq n$.  Suppose $i\neq j$.
Let $f\in I_{k-1}(B_i,B_j)$, $\dom(f)=\{r_1,\cdots ,r_{k-1}\}$ and $B_i\cap B_j=\{s_1,\cdots ,s_n\}$. There is $h\in I_k(B_i,B_j)$ such that $\dom(h)=\{r_1,\cdots,r_{k-1}, r_k\}$ and  $h(r_i)=s_i$ for each $i\in \{1,\cdots, k-1\}$ and $h(r_k)\not\in \{s_k,\cdots,s_n\}$. Also, there is $g\in S_\infty(B_j)$ such that $g(s_i)=f(r_i)$ for each $i\in \{1,\cdots, k-1\}$. Then we have that $f=g\circ 1_{B_i}\circ h$, and therefore $f\in S$. 

It remains to verify that  $I_{k-1}(B_i,B_i)\subseteq S$, whenever   $I_{k}(B_i,B_i)\subseteq S$. Fix $j\neq i$. Let $f\in I_{k-1}(B_i,B_i)$,  $\dom(f)=\{r_1,\cdots ,r_{k-1}\}$ and $B_i\cap B_j=\{s_1,\cdots ,s_n\}$.  There is $h\in I_k(B_i,B_i)$ such that $\dom(f)\subseteq \dom(h)=\{r_1,\cdots,r_{k-1}, r_k\}$ and $h(r_i)=s_i$ for each $i\in \{1,\cdots, k-1\}$ and $h(r_k)\not\in \{s_k,\cdots,s_n\}\}$. Pick $g\in S_\infty(B_i)$ such that $g(s_i)=f(r_i)$ for each $i\in \{1,\cdots, k-1\}$. Then we have that $f=g\circ 1_{B_j}\circ h$, i.e. $f\in S$.

Finally, we show that $1_\emptyset\in S$. Let $i\neq j$ and choose $g\in I_1(B_i,B_j)$ such that $\im(g)\cap B_i=\emptyset$. Then for any $f\in S_\infty(B_i)$, we have that $f\circ g=1_\emptyset$.

\end{proof}

\begin{prop}
\label{closedsubsemigroup}
Let  $\{B_i\;|\; i\in \N\}$ be a family of infinite subsets of $\N$. Let $n\in \N$ and  $A\subseteq \N$. We set  
$$
S(A)=\displaystyle\bigcup_{i\in A}S_{\infty}(B_i)\cup \bigcup_{i,j\in \N}\bigcup_{k=0}^{n}I_k(B_i,B_j).
$$ 
If $|B_i\cap B_j|\leq n$ for all $i\neq j$,  then $S(A)$ is closed in $I(\mathbb{N})$.
\end{prop}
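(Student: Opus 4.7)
The plan is to use the characterization of convergence in $I(\N)$ from Proposition \ref{conv}. Take a sequence $(f_m)$ in $S(A)$ with $f_m\to f$ in $I(\N)$; I want to show $f\in S(A)$. Each $f_m$ lies in either some $S_\infty(B_{i_m})$ with $i_m\in A$, or in some $I_{k_m}(B_{i_m},B_{j_m})$ with $k_m\le n$. By passing to a subsequence we may assume all $f_m$ are of one type.

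\textbf{Case A} (all $f_m$ have $|\dom(f_m)|\le n$). Pigeonhole on $k_m$ lets me assume $|\dom(f_m)|=k$ is constant. Since $\dom(f_m)\to\dom(f)$ in the product topology of $2^\N$, any $n+2$ distinct candidate points of $\dom(f)$ would eventually all lie in $\dom(f_m)$, so $|\dom(f)|\le k\le n$. As $\dom(f)$ is finite, condition (i) of Proposition \ref{conv} gives some large $m$ with $\dom(f)\subseteq\dom(f_m)\subseteq B_{i_m}$; similarly $\im(f)\subseteq B_{j_{m'}}$ for some large $m'$. Hence $f\in I_{|\dom(f)|}(B_{i_m},B_{j_{m'}})\subseteq S(A)$.

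\textbf{Case B} (all $f_m\in S_\infty(B_{i_m})$ with $i_m\in A$). Split into two subcases. If some index $i\in A$ appears infinitely often, thin so that $i_m=i$ is constant; then $\dom(f_m)=\im(f_m)=B_i$ is constant, and $2^\N$ being Hausdorff forces $\dom(f)=\im(f)=B_i$, so $f\in S_\infty(B_i)\subseteq S(A)$. Otherwise, thin so that the $i_m$ are pairwise distinct. The key step here is to use the almost disjointness hypothesis to prove $|\dom(f)|\le n$: if $\dom(f)$ contained $n+1$ distinct elements $x_1,\ldots,x_{n+1}$, then by Proposition \ref{conv}(i) there exists $M$ such that $\{x_1,\ldots,x_{n+1}\}\subseteq\dom(f_m)=B_{i_m}$ for all $m\ge M$; picking any two distinct $m,m'\ge M$ would give $|B_{i_m}\cap B_{i_{m'}}|\ge n+1$, contradicting $|B_i\cap B_j|\le n$. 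So $\dom(f)$ is finite of some size $k\le n$, and as in Case A, for sufficiently large $m$ both $\dom(f)\subseteq B_{i_m}$ and $\im(f)\subseteq B_{i_m}$, so $f\in I_k(B_{i_m},B_{i_m})\subseteq S(A)$.

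The main obstacle is Case B subcase (ii): this is the only place where the bound $|B_i\cap B_j|\le n$ enters essentially, and it is needed precisely to rule out limits with infinite domain that escape every single $B_i$. Notice also that the assumption that indices $i_m$ in Case B lie in $A$ only needs to be preserved in the constant-index subcase; once $|\dom(f)|\le n$, membership of $f$ in the $I_k(B_i,B_j)$ part of $S(A)$ does not require the indices to be in $A$, which is exactly why $S(A)$ (and not some smaller set) is the correct closure.
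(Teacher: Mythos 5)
Your proof is correct and takes essentially the same route as the paper's: sequential closedness via the convergence criterion of Proposition \ref{conv}, with the bound $|B_i\cap B_j|\le n$ used exactly where it must be, to exclude a limit whose domain has more than $n$ points arising from permutations of distinct sets $B_{i_m}$. The paper organizes the argument by whether $\dom(f)$ is finite or infinite rather than by the type of the terms $f_m$, but the content is the same (your version even spells out the bound $|\dom(f)|\le n$ in the finite case, which the paper states more tersely).
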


\begin{proof}
Let $(f_k)_{k\in \mathbb{N}}$ be a sequence in $S$ and $f\in I(\mathbb{N})$ such that $f_k \rightarrow f$. We need to show that $f\in S$. Let  $i_k,j_k\in \N$ be such that $\dom(f_k)\subseteq B_{i_k}$ and $\im(f_k)\subseteq B_{j_k}$. We consider two cases:

(a) Suppose $\dom(f)$ is finite.  Since $f_k \rightarrow f$, there exists $k_0$ such that $\dom(f)\subseteq \dom(f_k)\subseteq B_{i_k}$ for each $k\geq k_0$ (see Proposition \ref{conv}).  Let  $p=|\dom(f)|$, thus $p\leq n$. Analogously, there is $k_1\geq k_0$ such that $\im(f)\subseteq \im(f_k)\subseteq B_{j_k}$ for all $k\geq k_1$. Thus $f\in I_p(B_{i_{k_1}}, B_{j_{k_1}})$.

(b) Suppose $\dom(f)$ is infinite. Since $\dom(f_k)\to \dom(f)$ (see Proposition \ref{conv}), we can assume that $\dom(f_k)$ is infinite for all $k$. Thus, there is $i$ such that $\dom(f_k)=B_i$ for all $k$. Thus $f\in S_\infty(B_i)$.

\end{proof}

From Proposition \ref{closedsubsemigroup} we also get the following. 

\begin{thm}
\label{Polish-sub}
Let $\{B_i\;|\;i\in \N\}$ be a family of infinite subsets of $\N$ such that, for some fixed $n$,   $|B_i\cap B_j|=n$ for all $i\neq j$.   Then, the inverse semigroup generated by $\bigcup_{i\in \N}S_{\infty}(B_i)$ is Polish. \qed 
\end{thm}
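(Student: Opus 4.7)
The plan is very short because the theorem is essentially a direct corollary of the two propositions that immediately precede it. First, I would invoke Proposition \ref{semigrupogenerado}, which explicitly identifies the inverse subsemigroup $S$ generated by $\bigcup_{i\in\N} S_\infty(B_i)$ as
\[
S=\bigcup_{i\in \N}S_{\infty}(B_i)\cup \bigcup_{i,j\in \N}\bigcup_{k=0}^{n}I_k(B_i,B_j).
\]
This identification uses the hypothesis $|B_i \cap B_j| = n$ in an essential way, since the upper bound $k \le n$ on the size of the domain of a composition across different blocks comes exactly from that intersection size.

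Next, I would observe that this description of $S$ coincides with $S(\N)$ in the notation of Proposition \ref{closedsubsemigroup} (taking $A = \N$). The hypothesis of that proposition, $|B_i \cap B_j| \le n$ for $i \neq j$, is satisfied because we are assuming equality. Hence Proposition \ref{closedsubsemigroup} applies and yields that $S$ is a closed subset of $I(\N)$.

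Finally, since $(I(\N), \tau_{pp})$ is a Polish inverse semigroup (as recalled in the Preliminaries), and a closed subspace of a Polish space is Polish, $S$ inherits a Polish topology from $I(\N)$. The multiplication and inversion on $S$ are the restrictions of the corresponding continuous operations on $I(\N)$, so $S$ is a Polish inverse semigroup. There is no real obstacle here: the content of the theorem is entirely packaged in Propositions \ref{semigrupogenerado} and \ref{closedsubsemigroup}, and the only thing to do is paste them together and invoke the standard fact that closed subsets of Polish spaces are Polish.
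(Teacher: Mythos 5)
Your proposal is correct and matches the paper's intended argument exactly: the theorem is stated with a \qed precisely because it follows by combining Proposition \ref{semigrupogenerado} (identifying the generated semigroup) with Proposition \ref{closedsubsemigroup} applied with $A=\N$ (closedness in $I(\N)$), plus the standard fact that closed subspaces of Polish spaces are Polish. Nothing further is needed.
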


Now we present our first example of an inverse semigroup with the Pettis property.

\begin{thm}
\label{Pettis1}
Let $\{B_i\;|\;i\in \N\}$ be a family of infinite subsets of $\N$. Suppose there is $n$ such that  $|B_i\cap B_j|\leq n$ for $i\neq j$. If $n>0$, suppose furthermore that 
 $\{i\in \N:F\subseteq B_i\}$ is finite for all $F\subseteq \N$ of size $n$. Let 
$$
S=\displaystyle\bigcup_{i\in \N}S_{\infty}(B_i)\cup \bigcup_{i,j\in \N}\bigcup_{k=0}^{n}I_k(B_i,B_j).
$$ 
Then $S$ is a Polish inverse semigroup with  the Pettis property.
\end{thm}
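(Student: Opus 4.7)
My plan is to apply the criterion of Proposition~\ref{criterio-1} using the groups $O_i := S_\infty(B_i)$, $i\in\N$, as the countable family of open subgroups. First, $S$ is Polish by Proposition~\ref{closedsubsemigroup}, so what remains is to verify three things: (a) each $O_i$ is open in $S$; (b) $x^{-1}x\in I$ for every $x$ in the set $I$ of isolated points of $S$; and (c) $I\cup\bigcup_i O_i$ is dense in $S$.

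The case $n=0$ is quick: the $B_i$ are then pairwise disjoint, so $I=\emptyset$, and for any sequence of distinct indices $i_m$ and choice $f_m\in S_\infty(B_{i_m})$, Proposition~\ref{conv} gives $f_m\to 1_\emptyset$, so $\bigcup_i O_i$ is already dense. Openness of each $O_i$ in $S$ comes from pinning a single value, since for any $x_1\in B_i$ the set $\{i':x_1\in B_{i'}\}=\{i\}$ by disjointness.

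Assume now $n>0$. The extra hypothesis that $\{i:F\subseteq B_i\}$ is finite whenever $|F|=n$ is the workhorse, and the main claim is that the isolated points of $S$ are exactly $\bigcup_{i,j\in\N}I_n(B_i,B_j)$. To isolate $f\in I_n(B_i,B_j)$ with $\dom(f)=\{r_1,\dots,r_n\}$ and $s_l=f(r_l)$, I will build the open neighborhood
\[
U \;=\; \bigcap_{l=1}^{n}v(r_l,s_l)\;\cap\; \bigcap_{i'\in J} w_1(y_{i'}),
\]
where $J=\{i':\{r_1,\dots,r_n\}\subseteq B_{i'}\}$ is finite by hypothesis and each $y_{i'}\in B_{i'}\setminus\{r_1,\dots,r_n\}$ is any witness. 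Any $g\in S\cap U$ cannot lie in any $S_\infty(B_{i'})$ (the $w_1$-conditions exclude all $i'\in J$, and for $i'\notin J$ some $r_l$ lies outside $B_{i'}=\dom(g)$); so $g$ has finite domain, with $|\dom(g)|\ge n$ by the $v$-conditions and $|\dom(g)|\le n$ by the definition of $S$, forcing $g=f$. Conversely, every $f\in S_\infty(B_i)$ fails to be isolated (swap two values of $f$ at points of $B_i$ tending to infinity, and apply Proposition~\ref{conv}), and every $f\in I_k(B_i,B_j)$ with $k<n$ fails to be isolated (extend $f$ by one more domain point in $B_i\setminus\dom(f)$ going to infinity). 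Openness of $O_i$ at $f\in S_\infty(B_i)$ is analogous: pin $f$ on $n+1$ points $x_1,\dots,x_{n+1}\in B_i$, which restricts the competitors to the finite set $\{i':\{x_1,\dots,x_n\}\subseteq B_{i'}\}$, and for each such $i'\neq i$ pin $f$ additionally at some point of $B_i\setminus B_{i'}$.

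Item (b) holds because for $x\in I_n(B_i,B_j)$ one has $x^{-1}x=1_{\dom(x)}\in I_n(B_i,B_i)\subseteq I$. For (c), any $h\in I_k(B_i,B_j)$ with $k<n$ is the limit of elements $g_m\in I_n(B_i,B_j)$ obtained by adjoining $n-k$ extra domain points in $B_i\setminus\dom(h)$ mapped to distinct points of $B_j\setminus\im(h)$, all tending to infinity; so $h\in\overline{I}$. Proposition~\ref{criterio-1} then yields the Pettis property. The main obstacle is the isolation argument for $f\in I_n(B_i,B_j)$: this is precisely where the finiteness of $\{i:F\subseteq B_i\}$ is essential, since otherwise infinitely many groups $S_\infty(B_{i'})$ could approximate $f$, and no finite collection of $\tau_{pp}$-subbasic conditions could exclude them all.
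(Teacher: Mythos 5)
Your proposal is correct and follows essentially the same route as the paper: Polishness via Propositions~\ref{2.2} and~\ref{closedsubsemigroup}, then Proposition~\ref{criterio-1} applied to the open subgroups $S_\infty(B_i)$, with the isolated points identified as $\bigcup_{i,j}I_n(B_i,B_j)$ using the finiteness hypothesis on $\{i:F\subseteq B_i\}$ and the case $n=0$ treated separately. The only (harmless) differences are that you establish openness of each $S_\infty(B_i)$ and density of $I\cup\bigcup_i S_\infty(B_i)$ by direct neighborhood and limit arguments, whereas the paper gets openness from Proposition~\ref{closedsubsemigroup} and density from the observation that the complement is countable and closed.
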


\begin{proof} 
From Propositions \ref{2.2} and  \ref{closedsubsemigroup}, $S$ is a Polish inverse subsemigroup of $I(\N)$. We  use Proposition \ref{criterio-1} to show that $S$ has the Pettis property. 
Let $I$ be the collection of isolated points of $S$. By Proposition \ref{closedsubsemigroup} we know that each $S_\infty(B_i)$ is open.  Thus $S\setminus (\bigcup_n S_\infty(B_n)\cup I)$ is closed and countable. Hence, $\bigcup_n S_\infty(B_n)\cup I$ is dense in $S$. So, it remains to verify that 
$xx^{-1}\in I$ for all $x\in I$.

If $n=0$, then there are no isolated points as $1_\emptyset=\lim_i 1_{B_i}$. 
Suppose $n>0$. We  claim that
\[
I=\bigcup_{i,j\in \N}I_n(B_i,B_j).
\]
In fact, let $f\in I_n(B_i,B_j)$ and  $F=\dom(f)$. By hypothesis, there is $k_0$ such that $F\not\subseteq B_k$ for all $k>k_0$. Pick $b_k\in B_k\setminus F$ for all $k\leq k_0$. Let 
$$
V=\bigcap_{a\in F} v(a,f(a))\cap \bigcap_{k\leq k_0} w_1(b_k)\cap S.
$$
We need to show that $V =\{f\}$. Suppose not and let $g\in V$ with $g\neq f$. Then $|\dom(g)|>n$ and it has to be infinite. Thus $F\subseteq \dom(g)=B_k$ for some $k\leq k_0$. But this is impossible, as $b_k\not\in \dom(g)$.  Conversely, if $f\in S_\infty(B_i)$, then $f$ is not isolated, as each $S_\infty(B_i)$ is open in $S$. On the other hand, suppose $f\in I_{k}(B_i, B_j)$  with $k<n$. For each $l\in \N$, pick $n_l\in B_i\setminus \dom(f)$ and $m_l\in B_j\setminus\im(f)$  with $n_l\neq n_{l'}$ for $l'\neq l$. Consider $g_l=f\cup\{(n_l,m_l)\}$. Then $g_l\in I_{k+1}(B_i, B_j)$ and $g_l\to f$. Thus $f$ is not isolated. 

Thus, it is now clear  that 
$xx^{-1}\in I$ for all $x\in I$ and   all hypothesis of Proposition \ref{criterio-1} are satisfied. 

\end{proof}

The following example shows that the Pettis property for Polish inverse semigroups does not imply that a Baire measurable homomorphism is automatically continuous.

\begin{ejemplo}
\label{PettisNotAC}
Suppose $(B_n)_n$ is a pairwise disjoint family of infinite subsets of $\N$. Let $S=\bigcup_n S_\infty(B_n)\cup\{1_\emptyset\}$.  Let $\tau$ be the topology on $S$ as a subspace of $I(\N)$. Then $(S,\tau)$ is a Polish inverse semigroup with the Pettis property (by Theorem \ref{Pettis1}).  We will show that $(S,\tau)$ does not have  automatic continuity with respect to the class of Polish inverse semigroups. Notice that $\bigcup_n S_\infty(B_n)$ is open in $S$ and therefore Polish.  Consider
\[
S=\left (\bigcup_n S_\infty(B_n)\right)\oplus \{1_\emptyset\},
\]
where $\bigcup_nS_\infty(B_n)$ and $\{1_\emptyset\}$ are given the subspace topology. It is well known (and elementary)  that the topological sum of Polish spaces is Polish. Let $\rho$ be this new Polish topology  on $S$.   It is easy to verify that $(S,\rho)$ is a topological inverse semigroup. We only need to show that the following set is $\rho$-open:
\[
V=\{(f,g)\in S\times S:\; f\circ g=1_\emptyset\}.
\]
Suppose $f\in S_\infty(B_i)$, $g\in S_\infty(B_j)$  and $f\circ g=1_\emptyset$. Then, $i\neq i$ and  $(f,g)\in S_\infty(B_i)\times S_\infty(B_j)\subseteq V$. 

Finally, the identity function $f:(S,\tau)\to (S,
\rho)$ is clearly a Borel measurable homomorphism which  is not continuous since $1_\emptyset$ is not $\tau$-isolated. 
\end{ejemplo}

We show next that the previous result does not occur for  a finite partition of $\N$, namely, the generated inverse semigroup has automatic continuity.  Automatic continuity for semigroups is obtained by lifting the same property from a subgroup of the semigroup (\cite{elliott2020,elliott2022,MES}). Thus, we use the following  property of the symmetric group (see also \cite[$\S$ 5]{Rosendal2009}).

\begin{thm}(Kechris-Rosendal, \cite{KechrisRosenthal2007})
\label{AutCon}
$S_\infty(\N)$ has automatic continuity with respect to the class of second countable topological groups.
\end{thm}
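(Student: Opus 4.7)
The proof rests on two ingredients: that $S_\infty(\N)$ possesses ample generic elements, and that any Polish group with ample generics has a Steinhaus-type property strong enough to force continuity of every homomorphism into a second countable topological group.

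First, I would establish that $S_\infty(\N)$ has \emph{ample generics}, meaning that for each $n\geq 1$ the diagonal conjugation action of $S_\infty(\N)$ on $S_\infty(\N)^n$ has a comeager orbit. Identifying an $n$-tuple $(f_1,\ldots,f_n)$ with the structure $(\N;f_1,\ldots,f_n)$, orbits correspond to isomorphism classes, and the candidate comeager orbit is the isomorphism type of the Fra\"iss\'e limit of the class $\mathcal{K}_n$ of finite structures equipped with $n$ partial bijections. Two properties must be verified: joint embedding (straightforward via disjoint unions) and \emph{cofinal amalgamation}, which amounts to showing that any system of finitely many partial bijections on a finite set can be extended to one in which they become total bijections while respecting a prescribed partial amalgam. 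The nontrivial content here is essentially Hrushovski's theorem on extending partial isomorphisms of finite structures, adapted to the pure-set setting by Hodges-Hodkinson-Lascar-Shelah.

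With ample generics available, I would invoke the Kechris-Rosendal equivalence: any Polish group $G$ with ample generics satisfies the Steinhaus property, namely there is a fixed $k\in \N$ such that for every symmetric $W\subseteq G$ with $G=\bigcup_n g_nW$ for some sequence $(g_n)$, the set $W^k$ contains an open neighborhood of the identity. To conclude, given a homomorphism $\phi:S_\infty(\N)\to H$ with $H$ second countable and a symmetric open neighborhood $U$ of $1_H$, choose a symmetric open $V$ with $V^k\subseteq U$. Second countability of $H$ provides countably many translates of $V$ covering $H$; pulling back, $W:=\phi^{-1}(V)$ is symmetric and $S_\infty(\N)$ is covered by countably many translates of $W$. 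The Steinhaus property then places a neighborhood of the identity of $S_\infty(\N)$ inside $W^k\subseteq \phi^{-1}(U)$, proving continuity at the identity, hence everywhere.

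The main obstacle is the first step. Verifying cofinal amalgamation for $\mathcal{K}_n$ requires a delicate combinatorial construction, because extending each partial bijection to a total one independently may conflict with the amalgamation data; one must simultaneously extend all $n$ maps while preserving the identifications imposed by two given amalgams. Once this combinatorial core is in place, the passage to ample generics is a standard application of the Banach-Mazur game, and the reduction to automatic continuity through the Steinhaus property is a clean descriptive-set-theoretic argument.
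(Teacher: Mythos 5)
The paper offers no proof of this statement---it is quoted directly from Kechris--Rosendal---and your outline is precisely the standard argument from that source: ample generics for $S_\infty(\N)$ via the Fra\"iss\'e-theoretic criterion (JEP plus cofinal/weak amalgamation for finite sets with $n$ partial bijections), followed by the implication from ample generics to a Steinhaus-type property and hence to continuity of homomorphisms into separable (second countable) groups. One minor correction: for the pure-set setting the extension property for partial bijections is elementary (any finite system of partial injections extends to permutations of a larger finite set), so the hard combinatorial core of Hrushovski's theorem for graphs is not actually needed here, as Hodges--Hodkinson--Lascar--Shelah already observed.
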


\begin{thm}
Let $B_1,\cdots, B_m$ be a collection of infinite subsets of $\N$. Suppose there is $n$ such that $|B_i\cap B_j|\leq n$ for $i\neq j$ and let  $S$ be the semigroup generated by $\bigcup_{i=1}^m S_\infty(B_i)$. Then, $S$  is a Polish semigroup that has the Pettis property. Moreover, if the $B_i$'s are pairwise disjoint, then $S$ has automatic continuity with respect to the class of second countable topological inverse semigroups.
\end{thm}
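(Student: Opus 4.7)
My plan is to establish the three claims sequentially, following the pattern of the earlier results in the paper.

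First, I would show that $S$ is closed in $I(\N)$ by a direct convergence argument in the spirit of Proposition \ref{closedsubsemigroup}. Given $f_k\to f$ with $f_k\in S$, each $f_k$ is either in $S_\infty(B_{i(k)})$ for some $i(k)\in\{1,\ldots,m\}$ or is a finite partial bijection of size at most $n$ arising from a composition. If $\dom(f)$ is infinite, Proposition \ref{conv} forces $\dom(f_k)$ to be infinite for large $k$, and the pigeonhole principle on the finite index set $\{1,\ldots,m\}$ extracts a subsequence with constant $i(k)=i$; convergence of domains then gives $\dom(f)=B_i$ and $f\in S_\infty(B_i)\subseteq S$. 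If $\dom(f)$ is finite, Proposition \ref{conv} yields $f_k=f$ eventually, so $f\in S$. Hence $S$ is Polish.

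For the Pettis property I would apply Proposition \ref{criterio-1} with the finite collection of open subgroups $O_i=S_\infty(B_i)$. Each $S_\infty(B_i)$ is open in $S$ by the same reasoning: a sequence in $S$ converging to $f\in S_\infty(B_i)$ eventually has domain containing arbitrarily large finite subsets of $B_i$, which rules out $\dom(f_k)$ being a finite set (its size is bounded by $n$) or being $B_j$ with $j\ne i$ (since $|B_i\cap B_j|\le n$), forcing $\dom(f_k)=B_i$. The complement $T=S\setminus\bigcup_i S_\infty(B_i)$ consists of finite partial bijections, is a clopen countable subset of $S$, and by the Baire category theorem applied to $T$ its isolated points are dense in $T$; since $T$ is open in $S$, these are isolated in $S$ as well. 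Hence $I\cup\bigcup_i O_i$ is dense in $S$. The remaining condition $x^{\ast}x\in I$ for $x\in I$ is verified by translating a basic neighborhood witnessing the isolation of $x$ into a neighborhood of $1_{\dom(x)}$: drop the $v$-constraints and adjoin $w_1(b_i)$ and $w_2(b_i)$ conditions for suitable $b_i\in B_i\setminus\dom(x)$, over the finitely many indices $i$ with $\dom(x)\subseteq B_i$ or $\im(x)\subseteq B_i$.

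For the automatic continuity assertion, the pairwise disjoint hypothesis yields the clopen decomposition $S=\bigsqcup_{i=1}^m S_\infty(B_i)\sqcup\{1_\emptyset\}$, with $1_\emptyset$ isolated via the neighborhood $\bigcap_{i=1}^m w_1(b_i)\cap S$ for arbitrary $b_i\in B_i$. Given a homomorphism $\phi\colon S\to T$ into a second countable topological inverse semigroup, set $e=\phi(1_{B_i})\in E(T)$ and observe that $\phi|_{S_\infty(B_i)}$ is a group homomorphism into the maximal subgroup $H_e=\{y\in T:yy^{\ast}=y^{\ast}y=e\}$, which is a second countable topological group in the subspace topology. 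Since $S_\infty(B_i)$ is topologically isomorphic to $S_\infty(\N)$, Theorem \ref{AutCon} gives continuity of this restriction; continuity at the isolated point $1_\emptyset$ is automatic. Global continuity of $\phi$ follows from the clopen decomposition.

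The main obstacle is the verification in the Pettis step that $x^{\ast}x\in I$ for every isolated $x$: in the general (non-disjoint) case one must carefully track which finite partial bijections admit proper extensions within the generated semigroup, although the finiteness of the family $\{B_i\}_{i=1}^m$ keeps the case analysis tractable, much as in the proof of Theorem \ref{Pettis1}.
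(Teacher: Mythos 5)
The decisive problem is the step you yourself defer to the end: showing $x^{-1}x\in I$ for every isolated $x$. Your proposed translation of an isolating neighborhood of $x$ into one of $1_{\dom(x)}$ (dropping the $v$--constraints and adding $w_1,w_2$ constraints) does not work, and under the hypothesis $|B_i\cap B_j|\le n$ alone it cannot be made to work, because the \emph{generated} semigroup need not contain all rank $\le n$ partial bijections between the $B_i$'s. Concretely, take $m=3$ with $B_1\cap B_2=\{a,c\}$, $B_1\cap B_3=\{d\}$, $B_2\cap B_3=\emptyset$, and pick $b\in B_3\setminus (B_1\cup B_2)$. The map $x=\{(a,b)\}$ lies in $S$ (send $a\mapsto d$ inside $B_1$, cut down by $1_{B_3}$, then $d\mapsto b$ inside $B_3$), and $v(a,b)\cap S=\{x\}$: writing an element of $S$ as a product of permutations, any factorization whose image meets $B_3\setminus(B_1\cup B_2)$ must cross an intersection of size at most $1$, hence has rank at most $1$; so $x$ is isolated. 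But $x^{-1}x=1_{\{a\}}$ is \emph{not} isolated: the maps $h_l=\{(a,a),(c_l,d_l)\}$ with $c_l\in B_1$, $d_l\in B_2$ running off to infinity factor through $B_1\cap B_2$ (which has two elements), so they belong to $S$ and converge to $1_{\{a\}}$. Since this $S$ is closed in $I(\N)$ (argue as in Proposition \ref{closedsubsemigroup}), the remark at the start of Section 3 applied to $A=\{x\}$ shows this $S$ fails the Pettis property. So the isolation-transfer step is not a tractable case analysis; it is exactly where the argument in this generality breaks. It does go through when all pairwise intersections have the \emph{same} cardinality $n$ (in particular when the $B_i$ are pairwise disjoint), because then the analogue of Proposition \ref{semigrupogenerado} identifies $S$ with the full union $\bigcup_i S_\infty(B_i)\cup\bigcup_{i,j}\bigcup_{k\le n}I_k(B_i,B_j)$ and the proof of Theorem \ref{Pettis1} applies verbatim (its extra hypothesis is automatic for a finite family); alternatively one can work with that explicit union instead of the generated semigroup, which is what the quoted proof of Theorem \ref{Pettis1} really uses.

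A second, smaller error occurs in your closedness argument: if $\dom(f)$ is finite, Proposition \ref{conv} does \emph{not} give $f_k=f$ eventually (e.g.\ $1_{\{x_k\}}\to 1_\emptyset$ with distinct $x_k\in B_1$, and such rank-one maps can lie in $S$). Convergence only gives that eventually $f\subseteq f_k$, so to conclude $f\in S$ you must know that restrictions of finite-rank elements of $S$ remain in $S$; that is essentially the content of the explicit description of $S$ (the analogue of Proposition \ref{semigrupogenerado}, or the chain characterization), which your proposal uses only as an upper bound on $S$. By contrast, the automatic continuity part of your proposal is correct and is essentially the paper's argument: clopen decomposition $S=\bigcup_i S_\infty(B_i)\cup\{1_\emptyset\}$ with $1_\emptyset$ isolated, plus Theorem \ref{AutCon} applied to the restriction of $\varphi$ to each $S_\infty(B_i)$, whose image sits in a second countable topological group.
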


\begin{proof}
By an argument entirely analogous to that in the proof of Theorems \ref{semigrupogenerado} and Proposition \ref{closedsubsemigroup} we have  that $S$ is closed in $I(\N)$ and 
$$
S=\bigcup_{i=1}^m S_{\infty}(B_i)\cup \bigcup_{i,j=1}^m\bigcup_{k=0}^{n}I_k(B_i,B_j).
$$
Moreover, using the argument in the proof of Theorem \ref{Pettis1}, we have that $S$ has the Pettis property. 

Suppose that $B_1, \cdots, B_m$ are pairwise disjoint.  Then, 
\[
S=\bigcup_{i=1}^m S_\infty(B_i)\cup\{1_\emptyset\}
\]
and each $S_\infty(B_i)$ is a clopen subset of $S$. We will show that $S$ has automatic continuity. Let $T$ be a second countable topological inverse semigroup and $\varphi:S\to T$ be a homomorphism.  Let $\varphi_i:S_\infty(B_i)\to T$ be the restriction of $\varphi_i$  to $S_\infty(B_i)$. Then, the range of $\varphi_i$ is a second countable topological group. Hence by Theorem \ref{AutCon}, $\varphi_i$ is continuous.  Since each $S_\infty(B_i)$ is  clopen, $\varphi$ is also continuous.  
\end{proof}

The next example shows that  Theorem \ref{Pettis1} can not be generalized to any almost disjoint family of sets. It provides an example of an inverse semigroup without the Pettis property. 

\begin{ejemplo}
\label{unpuntoencomun}
Let $\{C_n:\; n\in \N\}$ be a partition of $\N\setminus\{0\}$ and $B_n=C_n\cup\{0\}$. Then $B_n\cap B_m=\{0\}$ for all $n\neq m$. Let $S$ be the inverse semigroup generated by $\bigcup_n S_\infty(B_n)$. Then, 
\[
S=\bigcup_{i\in \N}S_{\infty}(B_i)\cup I_1(\N)\cup \{1_\emptyset\} 
\]
where $I_1(\N)=I_1(\N,\N)$.  From Theorem \ref{Polish-sub} we know that $S$ is a Polish inverse subsemigroup of $I(\N)$. We will show that $S$ does not have neither the Pettis property nor automatic continuity.

We first characterize the isolated points of $S$. Let $u_x$ be the element of $S$ where $\dom(u_x)=\{x\}$ and $u_x(x)=0$. The isolated points of $S$ are
\[
I=\{u_x : x\in \N\setminus\{0\}\}\;\cup \;\{u^{-1}_x : x\in \N\setminus\{0\}\}\;\cup\; \{u_y^{-1}\circ u_x : x,y\in \N\setminus\{0\}\}.
\]
In fact,  given $x,y\in B_n\setminus\{0\}$ with $x\neq y$, $z\in B_m\setminus\{0\}$ and $m,n\in \N$, we have
\[
\{u_x\}=v(x,0)\cap w_1(y)\cap S,
\]
\[
\{u^{-1}_z\circ u_x\}=v(x,z)\cap w_1(0)\cap S.
\]
On the other hand, $u_0$ and $1_\emptyset$ are not isolated:
$u_0=\lim_n1_{B_n}$ and $1_\emptyset=\lim_n 1_{\{n\}}$.

$S$ does not have the Pettis property because  a somewhat trivial reason. Let $A=\{u^{-1}_1\}$, then  $A$ is a closed not meager set as $u^{-1}_1$ is isolated, but  $A^{-1}\circ A=\{u_0\}$ has empty interior. 

Now we show that $S$ does not have automatic continuity. As in Example \ref{PettisNotAC}, it suffices to show that the following topological sum makes $S$ a topological inverse semigroup: 
\[
S=\left (\bigcup_{i\in \N}S_{\infty}(B_i)\cup I_1(\N)\right)\oplus  \{1_\emptyset\} 
\]
Let $\rho$ be the new topology on $S$. 
We only need to show that the following set is $\rho$-open 
\[
V=\{(f,g)\in S\times S:\; f\circ g=1_\emptyset\}.
\]
This can be verified considering several cases. 

\begin{itemize}
    \item[(i)] Suppose $f\in S_\infty(B_i)$ and $f\circ g=1_\emptyset$. Then, $g\in I_1(\N)$ and it is easy to verify that $g$ has to be isolated. Thus $(f,g)\in S_\infty(B_i)\times \{g\}\subseteq V$. Analogously we treat the case  $g\in S_\infty(B_i)$ and $f\circ g=1_\emptyset$.
    
    \item[(ii)] Suppose $f,g\in I_1(\N)$ and $f\circ g=1_\emptyset$.  If $f$ and $g$ are isolated, there is nothing to show. Otherwise, suppose $f=u_0$ and $g\in I_1(\N)$ with $g\neq u_0$. Let $\dom(g)=\{x\}$. Then,  $(f,g)\in w_2(x)\times\{g\}\subseteq V$. The other case is similar. 
\end{itemize}
\end{ejemplo}

\bigskip

Now we present a characterization of the inverse semigroup associated to an arbitrary  almost disjoint family. For that we need to introduce a notion.

\begin{defn}
Let $\{B_i\;|\;i\in \N\}$ be an almost disjoint family of infinite subsets of $\N$. Let $m\in \N$ and $i,j\in \N$, an {\em $m$-chain between $B_i$ and $B_j$} is a tuple  $({k_1},...,{k_n})$ such that $|B_{i}\cap B_{k_1}|\geq m$, $|B_{k_1}\cap B_{k_2}|\geq m$, $\cdots$, $|B_{k_{n-1}}\cap B_{k_n}|\geq m$, and $|B_{k_n}\cap B_{j}|\geq m$. We allow  $i=j$, but in this case we require that $i\not=k_1$. 

For $i,j\in \N$, we let
\[
P_{i,j}=\sup \{m\in \N\text{ }|\;\mbox{there exists an $m$-chain between $B_i$ and $B_j$} \}.
\]
\end{defn}

Notice that $P_{i,j}$ could be equal to $\infty$. 

\bigskip 

\begin{thm}
Let $\{B_i\;|\;i\in \N\}$ be an almost disjoint family of infinite subsets of $\N$.  
Let $S$ be the inverse subsemigroup generated by $\bigcup_{i\in \N}S_{\infty}(B_i)$. Then 
\begin{equation}
\label{generado2}
S=\bigcup_{i\in \N}S_{\infty}(B_i)\cup \bigcup_{i,j\in \N}\bigcup_{k=1}^{P_{i,j}}I_k(B_i,B_j)\cup \{1_\emptyset\}.
\end{equation}
\end{thm}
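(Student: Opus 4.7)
The plan is to prove both inclusions in (\ref{generado2}), following the template of Proposition \ref{semigrupogenerado} but taking advantage of intermediate sets via chains.

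For $\subseteq$: since each $S_\infty(B_i)$ is closed under inverses, any $f\in S$ is a product $f=f_1\circ\cdots\circ f_r$ with $f_\ell\in S_\infty(B_{i_\ell})$, and by collapsing adjacent factors from the same group I may assume $i_\ell\neq i_{\ell+1}$. If $r=1$ then $f\in S_\infty(B_{i_1})$ and we are done. Otherwise, tracking the partial compositions $\psi_\ell:=f_\ell\circ\cdots\circ f_r$ shows inductively that $|\im(\psi_{\ell-1})|\leq|B_{i_\ell}\cap B_{i_{\ell-1}}|$, so
\[
k:=|\dom(f)|\leq \min_{1\leq \ell< r}|B_{i_\ell}\cap B_{i_{\ell+1}}|,
\]
which is finite by almost-disjointness. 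If $k=0$ then $f=1_\emptyset$. If $k\geq 1$, then $\dom(f)\subseteq B_{i_r}$, $\im(f)\subseteq B_{i_1}$, and the tuple $(i_{r-1},\ldots,i_1)$ is a $k$-chain between $B_{i_r}$ and $B_{i_1}$ (the reduction to distinct consecutive indices takes care of the caveat $k_1\neq i$ when $i_r=i_1$). Hence $k\leq P_{i_r,i_1}$ and $f\in I_k(B_{i_r},B_{i_1})$.

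For $\supseteq$: the containment $S_\infty(B_i)\subseteq S$ is immediate, and $1_\emptyset\in S$ follows by picking $i\neq j$, choosing $g\in S_\infty(B_j)$ sending the finite set $B_i\cap B_j$ into the infinite set $B_j\setminus B_i$, and checking $1_{B_i}\circ g\circ 1_{B_i}=1_\emptyset$. The main work is to show $I_k(B_i,B_j)\subseteq S$ for each $1\leq k\leq P_{i,j}$. Fix a $k$-chain $(k_1,\ldots,k_n)$, set $k_0=i$ and $k_{n+1}=j$, and for each $0\leq\ell\leq n$ pick a $k$-element subset $X_\ell=\{x_{\ell,1},\ldots,x_{\ell,k}\}\subseteq B_{k_\ell}\cap B_{k_{\ell+1}}$. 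Given $f\in I_k(B_i,B_j)$ with $\dom(f)=\{r_1,\ldots,r_k\}$ and $f(r_p)=s_p$, I construct $g_i\in S_\infty(B_i)$ with $g_i(r_p)=x_{0,p}$; for each $1\leq\ell\leq n$, a bijection $h_\ell\in S_\infty(B_{k_\ell})$ with $h_\ell(x_{\ell-1,p})=x_{\ell,p}$ and, crucially, sending the finite leftover set $(B_{k_\ell}\cap B_{k_{\ell-1}})\setminus X_{\ell-1}$ into the infinite set $B_{k_\ell}\setminus B_{k_{\ell+1}}$; and $g_j\in S_\infty(B_j)$ with $g_j(x_{n,p})=s_p$. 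A step-by-step tracking of $g_j\circ h_n\circ\cdots\circ h_1\circ g_i$ shows that the parasitic elements entering through $g_i^{-1}(B_i\cap B_{k_1})\setminus\{r_1,\ldots,r_k\}$ are routed by $h_1$ into $B_{k_1}\setminus B_{k_2}$ and hence filtered out at the next step; by the analogous design of the subsequent $h_\ell$'s the only surviving domain after the whole composition is $\{r_1,\ldots,r_k\}$, and on it the composition equals $f$.

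The main obstacle is precisely this bookkeeping in the $\supseteq$ direction: each $g_i\in S_\infty(B_i)$ is total on $B_i$, so the composition automatically drags along every preimage of $B_i\cap B_{k_1}$, not just the target set $\{r_1,\ldots,r_k\}$. Almost-disjointness is exactly what allows us to remove these parasites cleanly, since each set $B_{k_\ell}\setminus B_{k_{\ell+1}}$ is infinite and therefore offers ample room within the bijection $h_\ell$ to absorb the finitely many parasites inherited at stage $\ell$.
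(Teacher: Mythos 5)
Your proof is correct and follows essentially the same route as the paper: the inclusion $\subseteq$ rests on the same rank-versus-intersection bound that produces a chain (the paper packages this as closure of the right-hand side under products, concatenating chains), and $\supseteq$ uses the same routing of $f$ through the chain's intermediate groups while the parasites are dumped outside the next intersection, a construction the paper only writes out for $n=1$. The one detail to add --- which the paper also leaves tacit --- is that the $k$-chain should first be normalized so that consecutive indices, including the endpoints $i,j$, are distinct (and when everything collapses, i.e.\ only $|B_i\cap B_j|\ge k$ is available, one can use the chain $(j,i)$), since otherwise the leftover sets $(B_{k_\ell}\cap B_{k_{\ell-1}})\setminus X_{\ell-1}$ need not be finite nor the targets $B_{k_\ell}\setminus B_{k_{\ell+1}}$ infinite.
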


\begin{proof} We start by showing $\subseteq$ in \eqref{generado2}. For that end, it suffices to observe that the right hand side of \eqref{generado2} is an inverse subsemigroup of $I(\N)$. In fact, suppose first that   $f\in I_k(B_i,B_j)$ with $k\leq P_{i,j}$ and $g\in I_l(B_r, B_s)$ with $l\leq P_{r,s}$. Then $f\circ g\in I_q(B_r, B_j)$ for some $q\leq \min\{k,l\}$.  We need to show that  $q\leq P_{r,j}$. In fact, observe that an $l$-chain from $B_r$ to $B_s$ followed by an $k$-chain from $B_i$ to $B_j$ is a $q$-chain from $B_r$ to $B_j$ as $|B_s\cap B_i|\geq q$.  The other cases are similar and left to the reader. 

Conversely, let $m\leq P_{i,j}$ and $f\in I_m(B_i,B_j)$. Let $(k_1, \cdots, k_n)$ be a $m$-chain from $B_i$ to $B_j$.  We will verify the case $n=1$, the rest is completely similar. Suppose $|B_i\cap B_k|\geq m$ and $|B_k\cap B_j|\geq m$. 
Since $|\dom(f)|=m$, pick $g_1\in S_\infty(B_i)$ and $g_2\in S_\infty(B_k)$ such that
\[
\begin{array}{l}
g_1(\dom(f))  \subseteq  B_i\cap B_k,\\ \\
g_2(g_1(\dom(f))) \subseteq  B_k\cap B_j,\\ \\
g_2((B_k\cap B_j)\setminus g_1(\dom(f))) \cap  B_k\cap B_j=\emptyset. \\
{}
 \end{array}
 \]
 Finally, pick $g_3\in S_\infty(B_j)$ such that  $g_3(g_2(g_1(x)))=f(x)$ for all $x\in \dom(f)$. The last condition above guarantees that $\dom (g_3\circ g_2\circ g_1)=\dom (f)$. Therefore $f=g_3\circ g_2\circ g_1$ and $f\in S$. 

\end{proof}

\section{More subsemigroups of $I(\N)$}

We present two general constructions of  subsemigroups of $I(\N)$. The idea is quite natural: we impose some restrictions on the domain and range of the functions.    We construct several Polish inverse subsemigroup of $I(\N)$ without  Pettis property. In particular, $I(\N)$ does not have this property.

For each collection $\mathcal{C}$ of subsets of $\N$ we put
\[
\mathbb{S}(\mathcal{C}) = \{h\in I(\N):\; \dom(h)\in \mathcal{C}\;\textit{and}\; \im(h) \in \mathcal{C}\}
\]
and
\[
\mathbb{S}^+(\mathcal{C}) = \{h\in I(\N): (\N\setminus\dom(h))\in \mathcal{C}\;\textit{and}\; (\N\setminus\im(h))\in \mathcal{C}\}. 
\]
Notice that $\mathbb{S}^+(\mathcal{P}(\N))=I(\N)$.

A collection $\ideal$ of subsets of $\N$ is an  {\em ideal}, if $\ideal$ hereditary (i.e. if $A\subseteq B\in \ideal$, then $A\in \ideal$) and  closed under finite unions. An ideal is {\em proper} if $\N$ does not belong to the ideal. We will use  ideals which are $F_\sigma$ as subsets of $\cantor$. There are plenty of such ideals (see, for instance, \cite{uzcasurvey}).  Suppose $\ideal$ is a proper  ideal  of subsets of $\N$ and let $\ideal^+$ be $\mathcal{P}(\N)\setminus \ideal$. Then 
\[
\mathbb{S}(\ideal)\subseteq \mathbb{S}^+(\ideal^+).
\]

\begin{prop}
\label{S(C)}
Let $\mathcal{C}$ be a collection of subsets of $\N$. 
\begin{itemize}
\item[(i)] If $\mathcal{C}$ is hereditary, then $\mathbb{S}(\mathcal{C})$ is an inverse subsemigroup of $I(\N)$. 

\item[(ii)] If $\mathbb{S}(\mathcal{C})$ is an inverse subsemigroup, then $\mathcal{C}$ is closed under intersections.

\item[(iii)] If $\mathcal{C}$ contains all finite sets, then $\mathbb{S}(\mathcal{C})$ is dense in $I(\N)$.

\item[(iv)] If $\mathcal{C}$ is a closed subset of $\cantor$, then $\mathbb{S}(\mathcal{C})$ is closed in $I(\N)$.
\item[(v)]If $\mathcal{C}$ is upward closed, then $\mathbb{S}^+(\mathcal{C})$ is an inverse subsemigroup of $I(\N)$. If in addition, $\mathcal{C}$ is $G_\delta$, then so is $\mathbb{S}^+(\mathcal{C})$ and therefore it is a Polish inverse semigroup. 

\end{itemize}
\end{prop}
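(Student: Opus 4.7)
The five parts are largely independent, and most reduce to one-line observations once the right continuous map or the right test element is identified; I would handle them in the order (i), (ii), (iv), (v), (iii), since (iii) is the one that genuinely uses the definition of the topology $\tau_{pp}$.

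For (i) and (v), the crucial containments are $\dom(f\circ g)\subseteq \dom(g)$ and $\im(f\circ g)\subseteq \im(f)$. Under hereditariness this gives $\mathbb{S}(\mathcal{C})$ closed under composition, and under upward closure of $\mathcal{C}$, since $\N\setminus\dom(f\circ g)\supseteq \N\setminus\dom(g)\in\mathcal{C}$ and similarly for the image, it gives $\mathbb{S}^+(\mathcal{C})$ closed under composition. Closure under the inverse operation in both cases is immediate from the fact that $\dom(f^*)=\im(f)$ and $\im(f^*)=\dom(f)$. For (ii), I would test on the idempotents: given $A,B\in\mathcal{C}$, apply the assumption to $1_A,1_B\in \mathbb{S}(\mathcal{C})$ and read off $1_A\circ 1_B = 1_{A\cap B}\in \mathbb{S}(\mathcal{C})$, so $A\cap B\in\mathcal{C}$.

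Parts (iv) and the second half of (v) are just preimages under continuous maps. Since $\dom,\im\colon I(\N)\to 2^{\N}$ are continuous by the very definition of $\tau_{pp}$, we can write
\[
\mathbb{S}(\mathcal{C})=\dom^{-1}(\mathcal{C})\cap \im^{-1}(\mathcal{C}),
\]
which is closed when $\mathcal{C}$ is, giving (iv). For $\mathbb{S}^+(\mathcal{C})$, I would compose $\dom$ and $\im$ with the (continuous) complementation map $2^{\N}\to 2^{\N}$, so that $\mathbb{S}^+(\mathcal{C})$ is again an intersection of preimages of $\mathcal{C}$; if $\mathcal{C}$ is $G_\delta$ in $2^{\N}$, then $\mathbb{S}^+(\mathcal{C})$ is $G_\delta$ in the Polish space $I(\N)$, hence Polish in the subspace topology, and the operations restrict to continuous ones so it is a Polish inverse semigroup.

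The one step that requires a small argument is (iii). Any basic $\tau_{pp}$-neighborhood of an $f\in I(\N)$ has the form
\[
U=\bigcap_{i=1}^{n} v(x_i,f(x_i))\cap \bigcap_{j=1}^{m} w_1(a_j)\cap \bigcap_{k=1}^{\ell} w_2(b_k),
\]
with $\{x_1,\dots,x_n\}\subseteq \dom(f)$ and none of the $a_j$ (resp.\ $b_k$) in $\dom(f)$ (resp.\ $\im(f)$). I would take $g$ to be the restriction of $f$ to the finite set $\{x_1,\dots,x_n\}$; then $\dom(g)$ and $\im(g)$ are finite, so they lie in $\mathcal{C}$ by hypothesis, and a direct check shows $g\in U$, since automatically $a_j\notin\{x_1,\dots,x_n\}$ and $b_k\notin\{f(x_1),\dots,f(x_n)\}$. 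The only conceptual care is in checking the $w_1$ and $w_2$ clauses, which is what I identified as the main (minor) obstacle; the rest is routine.
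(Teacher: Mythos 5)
Your proposal is correct and follows essentially the same route as the paper: the containments $\dom(f\circ g)\subseteq\dom(g)$, $\im(f\circ g)\subseteq\im(f)$ for (i) and (v), testing on the idempotents $1_A\circ 1_B=1_{A\cap B}$ for (ii), preimages of $\mathcal{C}$ under the continuous maps $\dom$ and $\im$ (composed with complementation) for (iv) and the $G_\delta$ claim in (v), and a finite partial bijection placed inside a basic open set for (iii). Your version of (iii), restricting an element $f$ of the open set to $\{x_1,\dots,x_n\}$, is just a slight rephrasing of the paper's direct construction of $\{(x_i,y_i):1\leq i\leq n\}$, and the verification of the $w_1,w_2$ clauses goes through exactly as you indicate.
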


\begin{proof}

\begin{itemize}
\item[(i)] Suppose $f,g\in \mathbb{S}(\mathcal{C})$. Notice that $\dom(f\circ g)\subseteq \dom(g)$ and $\im(f\circ g)\subseteq \im(f)$. Since $\mathcal{C}$ is hereditary, $f\circ g\in \mathbb{S}(\mathcal{C})$. 

\item[(ii)] Let $A,B\in \mathcal{C}$. Since $1_{A}\circ 1_{B}=1_{A\cap B}\in \mathbb{S}(\mathcal{C}),$ then $A\cap B\in \mathcal{C}.$ 

\item[(iii)] 
Let $\{x_i: \;1\leq i\leq n\}$, $\{y_i: \;1\leq i\leq n\}$, $\{u_i: \;1\leq i\leq m\}$ and $\{z_i: \;1\leq i\leq l\}$ be finite subsets of $\N$ such that $\{x_i: \;1\leq i\leq n\}\cap \{u_i: \;1\leq i\leq m\}=\emptyset$, $\{y_i: \;1\leq i\leq n\}\cap \{z_i: \;1\leq i\leq l\}=\emptyset$. Consider the following basic open set of $I(\N)$:
\[
V=\bigcap_{i=1}^n v(x_i,y_i)\cap \bigcap_{j=1}^m w_1(u_j)\cap \bigcap_{k=1}^l w_2(z_k).
\]
Let $f=\{(x_i,y_i):\; 1\leq i\leq n\}$. As $\mathcal{C}$ contains all finite sets, $f\in V\cap \mathbb{S}(\mathcal{C})$.

\item[(iv)] It follows from the continuity of the functions $\dom, \im:I(\N)\to \cantor$. 
\item[(v)] Similar to (i) and (iv). It is a well known classical theorem that every $G_\delta$ subset of a Polish space is also Polish.
\end{itemize}
\end{proof}
\bigskip

In the previous proposition, the converse of (ii) is not true. Consider $\mathcal{C}=\{\{0,\cdots, n\}:\; n\in \N\}$ and the function $f:\{0,1\}\to \{0,1\}$ given by  $f(0)=1$ and $f(1)=0$.  Notice that  $f\circ 1_{\{0\}}\notin \mathbb{S}(\mathcal{C})$.

\bigskip

Let $[\N]^{\leq n}$ be the collection of all subsets of $\N$ with at most $n$ elements. 
The inverse semigroup  $\mathbb{S}([\N]^{\leq n})$ has been studied in the literature  (see, for instance, \cite{Gutik2009b,Gutik-etal-2009}).  It is easy to verify that  $[\N]^{\leq n}$ is a closed subset of $\cantor$, thus from Proposition \ref{S(C)} we get 

\begin{ejemplo}
$\mathbb{S}([\N]^{\leq n})$ is a Polish inverse subsemigroup of $I(\N)$. 
\end{ejemplo}

\begin{ejemplo}
Consider the Schreier family
\[
\mathcal{C}=\{F\subseteq \N:\; |F|\leq \min(F)+1\}.
\]
$\mathcal{C}$ is a closed subset of $\cantor$ homemorphic to the ordinal  $\omega^\omega+1$ (with the order topology). 
Hence $\mathbb{S}(\mathcal{C})$ is a Polish inverse subsemigroup of $I(\N)$. It can be shown that it has Cantor-Bendixon rank equal to $\omega$.
\end{ejemplo}

Our next result provides examples of Polish inverse semigroups without the Pettis property. Which moreover do not have isolated points (in contrast with Example \ref{unpuntoencomun}).

\begin{thm}
\label{nonPP}
Let $\ideal$ be a proper $F_\sigma$ ideal of subsets of $\N$ containing all finite sets. Then $\mathbb{S}^+(\ideal^+)$ is a Polish inverse subsemigroup of $I(\N)$ without the Pettis property.
\end{thm}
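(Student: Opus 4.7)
The Polish conclusion is immediate from Proposition~\ref{S(C)}(v): since $\ideal$ is $F_\sigma$ and hereditary, $\ideal^+$ is an upward closed $G_\delta$ subset of $\cantor$.

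My strategy to refute Pettis is to construct a non-meager Baire set $A\subseteq \mathbb{S}^+(\ideal^+)$ with $A^{-1}A\subseteq A$ but with no open neighborhood of any idempotent contained in $A$. The engine will be an upward closed family $X\subseteq \ideal^+$ with empty interior in $\ideal^+$. First I will note that $\ideal$ is not a maximal ideal: a maximal ideal of $\mathcal{P}(\N)$ containing all finite sets is classically known to fail the Baire property, contradicting $\ideal$ being $F_\sigma$. Hence there is a partition $\N=B_0\sqcup B_1$ with $B_0,B_1\in \ideal^+$. I then set
\[
X=\{K\in \ideal^+ :\ |K\cap B_0|=\infty \textit{ and } |K\cap B_1|=\infty\}
\]
and
\[
A=\{h\in \mathbb{S}^+(\ideal^+) :\ \N\setminus \dom(h)\in X \textit{ and } \N\setminus \im(h)\in X\}.
\]
The upward closedness of $X$ under $\subseteq$ is immediate.

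The verification will consist of three parts. To see that $A$ is non-meager with the Baire property, I will check that for each $i\in\{0,1\}$ and $n\in\N$ the sets $\{h:|B_i\setminus \dom(h)|\geq n\}$ and $\{h:|B_i\setminus \im(h)|\geq n\}$ are open and dense in $\mathbb{S}^+(\ideal^+)$; density will hold because any nonempty basic open set contains the finite partial bijection realizing only its required value constraints, and such a bijection has cofinite complements of domain and image, which lie in $\ideal^+$ by the properness of $\ideal$. Taking the countable intersection will show that $A$ is a dense $G_\delta$. Next, to obtain $A^{-1}A\subseteq A$, I will use the set-theoretic identities $\dom(f^{-1}\circ g)\subseteq \dom(g)$ and $\im(f^{-1}\circ g)\subseteq \dom(f)$, which yield $\N\setminus \dom(f^{-1}\circ g)\supseteq \N\setminus \dom(g)\in X$ and $\N\setminus \im(f^{-1}\circ g)\supseteq \N\setminus \dom(f)\in X$; the upward closedness of $X$, together with the fact that $f^{-1}\circ g\in \mathbb{S}^+(\ideal^+)$ by Proposition~\ref{S(C)}(v), will then place $f^{-1}\circ g\in A$.

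For the third step, I will fix any idempotent $1_B\in \mathbb{S}^+(\ideal^+)$ and any basic open neighborhood
\[
V=\bigcap_{i=1}^n v(x_i,x_i)\cap \bigcap_{j=1}^m w_1(u_j)\cap \bigcap_{k=1}^\ell w_2(z_k)\cap \mathbb{S}^+(\ideal^+),
\]
where necessarily $x_i\in B$ and $u_j,z_k\in \N\setminus B$. I will propose the witness $h=1_E$ with
\[
E=(B_0\cup\{x_1,\dots,x_n\})\setminus(\{u_1,\dots,u_m\}\cup\{z_1,\dots,z_\ell\}),
\]
and verify that $h\in V$, that $h\in \mathbb{S}^+(\ideal^+)$ (using $\N\setminus E\supseteq B_1\setminus\{x_1,\dots,x_n\}\in \ideal^+$), and that $B_0\setminus E$ is finite, which forces $\N\setminus \dom(h)\notin X$ and so $h\notin A\supseteq A^{-1}A$. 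This yields $V\not\subseteq A^{-1}A$, completing the failure of Pettis. The main obstacle is the set-theoretic bookkeeping needed to construct the witness $h$ so that it simultaneously lies in $V$, belongs to $\mathbb{S}^+(\ideal^+)$, and violates the $X$-condition; apart from that, the argument is a combination of routine Baire category and short composition identities.
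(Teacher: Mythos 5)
Your proof is correct, but your witness is genuinely different from the paper's. The paper also refutes Pettis by exhibiting a comeager, Baire measurable set that is closed under $a^{-1}b$ yet is nowhere a neighborhood of an idempotent, but its witness is $L=\mathbb{S}^+(\mathcal{J}^+)$ where $\mathcal{J}$ is the $F_\sigma$ ideal generated by $\ideal\cup\{A\}$ for some $A$ with $A,A^c\notin\ideal$; it then shows $L$ is a dense $G_\delta$ inverse subsemigroup of $S=\mathbb{S}^+(\ideal^+)$ with empty interior (so in fact no nonempty open set at all, not just neighborhoods of idempotents, sits inside $L^{-1}L=L$). Your witness $A=\mathbb{S}^+(X)$, with $X$ the family of $\ideal$-positive sets meeting both halves of a partition $\N=B_0\sqcup B_1$ into positive sets infinitely, is not of the form $\mathbb{S}^+(\mathcal{J}^+)$ for any ideal $\mathcal{J}$ (the complement of $X$ is not closed under unions), so the two constructions are distinct even though the skeleton is parallel. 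Both arguments rest on the same classical fact that an $F_\sigma$ ideal containing all finite sets is not maximal; the paper uses it tacitly when choosing $A$, while you justify it explicitly via the Baire property, which is a point in your favor. Your route is also slightly more self-contained on the descriptive-set side: the relevant sets $\{h:|B_i\setminus\dom(h)|\geq n\}$ and their image analogues are visibly open (unions of finite intersections of $w_1$'s, resp.\ $w_2$'s), whereas the paper needs that $\mathcal{J}^+$ is $G_\delta$. Two small streamlinings you could adopt: since $X$ is upward closed, Proposition \ref{S(C)}(v) already gives that $\mathbb{S}^+(X)$ is an inverse subsemigroup, which yields $A^{-1}A\subseteq A$ without the composition identities; and note that your step 3, restricted to idempotent neighborhoods, suffices for Pettis, while the paper's density argument gives the marginally stronger conclusion that its witness has empty interior everywhere.
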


\begin{proof} 
Let $S=\mathbb{S}^+(\mathcal{I}^+)$. Since $\ideal$ is 
hereditary, $\mathcal{I}^+$ is upward closed and,  by Proposition \ref{S(C)}, $S$ is an inverse semigroup.  Since $\mathcal{I}^+$ is $G_\delta$ as a subset of $\cantor$, $S$ is a $G_\delta$ subset of $I(\N)$ and thus it is a Polish inverse semigroup.  

Now we show that $S$ does not have the Pettis property. Let  $A\subseteq \N$ with $A, A^c\not\in \ideal$ and $\mathcal{J}$ be the  ideal generated by $\ideal\cup\{A\}$. Then $\mathcal{J}$ is  a proper  $F_\sigma$ ideal  properly extending $\ideal$. Let  $L=\mathbb{S}^+(\mathcal{J}^+)$. Since $\mathcal{J}^+\subseteq \ideal^+$, $L\subseteq S$.   We will show that $L$ is a witness  that $S$ does not have the Pettis property. 

First of all,    $L$ is an inverse semigroup and thus $L=L^{-1}$ and $L\circ L=L$. Since $\mathcal{J}^+$ is $G_\delta$ as a subset of $\cantor$, so is $L$ as a  subset of $S$. Moreover, $\mathbb{S}(\mathcal{J})\subseteq L$ and every finite set belongs to $\mathcal{J}$, thus $L$ is dense in $I(\N)$ (by Proposition \ref{S(C)}). Hence $L$ is non meager in $S$.  

It remains to verify  that $L$ has empty interior in $S$.
We need to show that $S\setminus L$ is dense in $S$. 
Let $\{x_i: \;1\leq i\leq n\}$, $\{y_i: \;1\leq i\leq n\}$, $\{u_i: \;1\leq i\leq m\}$ and $\{z_i: \;1\leq i\leq l\}$  be finite subsets of $\N$ such that $\{x_i: \;1\leq i\leq n\}\cap \{u_i: \;1\leq i\leq m\}=\emptyset$, $\{y_i: \;1\leq i\leq n\}\cap \{z_i: \;1\leq i\leq l\}=\emptyset$. Let $V$ be a non empty basic open set of $I(\N)$:
\[
V=\bigcap_{i=1}^n v(x_i,y_i)\cap \bigcap_{j=1}^m w_1(u_j)\cap \bigcap_{k=1}^l w_2(z_k).
\]
Let $X=\{x_i:\;1\leq i\leq n\}$, $Y=\{y_i:\;1\leq i\leq n\}$, $U=\{u_j:1\leq j\leq m\}$ and $Z=\{z_k:1\leq k\leq l\}$. 
Let $B=A^c\setminus (X\cup Y\cup U\cup Z)$ and  $f=1_B\cup\{(x_i,y_i):1\leq i\leq n\}$.
Clearly $f\in V$. On the other hand, 
\[
\dom(f)^c=(B\cup X)^c=(A\cup  X\cup Y\cup U\cup Z)\cap X^c.
\]
Since $A=(A\cap X)\cup (A\cap X^c)$, $X\in \ideal$ and $A\not\in \ideal$, $\dom(f)^c\in \mathcal{J}\setminus\ideal$. Analogously, $\im(f)^c\in \mathcal{J}\setminus\ideal$. Thus $f\in V\cap (S\setminus L)$. 

\end{proof}

For $\ideal=\emptyset$, $\ideal^+=\mathcal{P}(\N)$ and $I(\N)=\mathbb{S}^+(\ideal^+)$.   
The reader can verify that for $\ideal=\emptyset$ in the proof of Proposition \ref{nonPP}  we still can conclude that $f\in V\cap (S\setminus L)$. Thus, we get the following. 

\begin{cor}
$I(\N)$ does not have the Pettis property.
\end{cor}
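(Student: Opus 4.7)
The plan is to construct, directly inside $I(\N)$, a witness $L$ to the failure of the Pettis property, by adapting the construction from the proof of Theorem \ref{nonPP} so that the ambient semigroup is $I(\N)$ itself. Specifically, I would fix an infinite, coinfinite $A \subseteq \N$, let $\mathcal{J} = \{B \subseteq \N : B \setminus A \text{ finite}\}$ (the proper $F_\sigma$ ideal generated by $\mathrm{Fin} \cup \{A\}$), and put $L = \mathbb{S}^+(\mathcal{J}^+) \subseteq I(\N)$. By Proposition \ref{S(C)}(v), $L$ is a $G_\delta$ inverse subsemigroup, so $L^{-1}L = L$. Since $\mathcal{J}$ contains $\mathrm{Fin}$, Proposition \ref{S(C)}(iii) gives that $\mathbb{S}(\mathcal{J})$ is dense in $I(\N)$; a short check (using $A^c$ infinite) shows $\mathbb{S}(\mathcal{J}) \subseteq L$, so $L$ is a dense $G_\delta$ in $I(\N)$, hence comeager, and in particular non-meager and Baire measurable.

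The core step is to show that $L$ has empty interior in $I(\N)$; once this is in hand, no non-empty open set (let alone one containing an idempotent) can lie inside $L^{-1}L = L$, so the Pettis property must fail. Given any non-empty basic open $V = \bigcap_i v(x_i, y_i) \cap \bigcap_j w_1(u_j) \cap \bigcap_k w_2(z_k)$ of $I(\N)$, I would exhibit exactly the function $f = 1_B \cup \{(x_i, y_i) : i \leq n\}$ with $B = A^c \setminus (X \cup Y \cup U \cup Z)$ used in the proof of Theorem \ref{nonPP}: a direct inspection shows $f \in V$, and since $\dom(f)^c \setminus A \subseteq (Y \cup U \cup Z) \setminus X$ is finite, we have $\dom(f)^c \in \mathcal{J}$, whence $f \notin L$ and so $V \not\subseteq L$.

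The one delicate point is that Theorem \ref{nonPP} yields empty interior inside the subsemigroup $S = \mathbb{S}^+(\mathrm{Fin}^+)$, whereas here I need empty interior in all of $I(\N)$; this transfers without trouble because the explicit witness $f \in V \setminus L$ and the basic open $V$ both live in $I(\N)$, and no feature of a smaller ambient semigroup enters the argument.
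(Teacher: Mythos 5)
Your proof is correct and takes essentially the same route as the paper, which obtains the corollary by rerunning the argument of Theorem \ref{nonPP} with the ambient semigroup equal to $I(\N)$ itself (formally, with $\ideal=\emptyset$), using the same witness $f=1_B\cup\{(x_i,y_i)\}$. Your explicit choice $\mathcal{J}=\{B\subseteq\N:\ B\setminus A\ \mbox{finite}\}$ is the right way to make the paper's ``the reader can verify'' remark precise, since including the finite sets in $\mathcal{J}$ is what guarantees both that $L=\mathbb{S}^+(\mathcal{J}^+)$ is dense (hence non-meager) and that the constructed $f$ falls outside $L$.
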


\bibliographystyle{plain}

\end{document}